\documentclass[12pt]{article}
\pagestyle{plain}

\oddsidemargin  0pt
\evensidemargin 0pt
\marginparwidth 40pt
\marginparsep 10pt
\topmargin 0pt
\headsep 10pt
\textheight 8.5in
\textwidth 6.5in

\usepackage{amsthm}

\usepackage[latin1]{inputenc}
\usepackage{subfigure}
\usepackage{color}
\usepackage{amsmath}
\usepackage{amsthm}
\usepackage{amstext}
\usepackage{amssymb}
\usepackage{amsfonts}
\usepackage{graphicx}
\usepackage{mathrsfs}
\usepackage{bbm}
\usepackage[all]{xy}
\usepackage{mathabx}
\usepackage{tikz}
\usepackage{mathtools}
\usepackage{tabularx}
\usepackage{array}
\usepackage{enumitem}
\usepackage{listings}
\lstset{
	basicstyle=\small\ttfamily,
	keywordstyle=\color{blue},
	language=python,
	xleftmargin=16pt,
}

\theoremstyle{plain}

\newtheorem{theorem}{Theorem}[section]
\newtheorem{conjecture}[theorem]{Conjecture}

\newtheorem{lemma}[theorem]{Lemma}

\theoremstyle{definition}

\newtheorem{question}[theorem]{Question}

\DeclareMathAlphabet{\mathpzc}{OT1}{pzc}{m}{it}

\usepackage{amssymb,amsmath,tabularx,graphicx}
\usepackage{tikz}
\usetikzlibrary[calc,intersections,through,backgrounds,arrows,decorations.pathmorphing]

\begin{document}
	
	\title{On almost $k$-covers of hypercubes}
	\author{Alexander Clifton \thanks{Department of Mathematics, Emory University, Atlanta, USA. Email: alexander.james.clifton@emory.edu. Research supported in part by a George W. Woodruff Fellowship.} \and Hao Huang \thanks{Department of Mathematics, Emory University, Atlanta, USA. Email: hao.huang@emory.edu. Research supported in part by the Collaboration Grants from the Simons Foundation.}}

\maketitle
\abstract{In this paper, we consider the following problem: what is the minimum number of affine hyperplanes in $\mathbb{R}^n$, such that all the vertices of $\{0, 1\}^n \setminus \{\vec{0}\}$ are covered at least $k$ times, and $\vec{0}$ is uncovered? The $k=1$ case is the well-known Alon-F\"uredi theorem which says a minimum of $n$ affine hyperplanes is required, proved by the Combinatorial Nullstellensatz.

We develop an analogue of the Lubell-Yamamoto-Meshalkin inequality for subset sums, and completely solve the fractional version of this problem, which also provides an asymptotic answer to the integral version for fixed $n$ and $k \rightarrow \infty$. We also use a Punctured Combinatorial Nullstellensatz developed by Ball and Serra, to show that a minimum of $n+3$ affine hyperplanes is needed for $k=3$, and pose a conjecture for arbitrary $k$ and large $n$.

}
\section{Introduction}
Alon's Combinatorial Nullstellensatz \cite{nullstellensatz} is one of the most powerful algebraic tools in modern combinatorics. Alon and F\"uredi \cite{alon-furedi} used this method to prove the following elegant result: any set of affine hyperplanes that covers all the vertices of the $n$-cube $Q^n=\{0, 1\}^n$ but one contains at least $n$ affine hyperplanes. There are many generalizations and analogues of this theorem: for rectangular boxes \cite{alon-furedi}, Desarguesian affine and projective planes \cite{bs, jamison}, quadratic surfaces and Hermitian varieties in $PG(n, q)$ \cite{bbs}. The common theme of these results are: in many point-line (point-surface) geometries, to cover all the points except one, more lines are needed than to cover all points.

In this paper, we consider the following generalization of the Alon-F\"uredi theorem. Let $f(n, k)$ be the minimum number of affine hyperplanes needed to cover every vertex of $Q^n$ at least $k$ times (except for $\vec{0}= (0, \cdots, 0)$ which is not covered at all). For convenience, from now on we call such a cover an {\it almost $k$-cover} of the $n$-cube. The Alon-F\"uredi theorem gives $f(n, 1)=n$ since the affine hyperplanes $x_i=1$, for $i=1, \cdots, n$ covers $Q^n\setminus\{\vec{0}\}$. Their result also leads to  $f(n, 2)=n+1$. The lower bound follows from observing that when removing one hyperplane from an almost $2$-cover, the remaining hyperplanes form an almost $1$-cover. On the other hand, the $n$ affine hyperplanes $x_i=1$, together with $x_1 + \cdots + x_n=1$ cover every vertex of $Q^n\setminus\{\vec{0}\}$ at least twice.

These observations immediately lead to a lower bound $f(n, k) \ge n+k-1$ by removing $k-1$ affine hyperplanes, and an upper bound $f(n, k) \le n+ \binom{k}{2}$ by considering the following almost $k$-cover: $x_i=1$ for $i=1, \cdots, n$, together with $k-t$ copies of $\sum_{i=1}^n x_i=t$, for $t=1, \cdots, k-1$. In this construction, every binary vector with $t$ $1$-coordinates is covered $t$ times by $\{x_i=1\}$, and $k-t$ times by $x_1 + \cdots + x_n=t$. The total number of hyperplanes is $n+\sum_{t=1}^{k-1} (k-t) = n + \binom{k}{2}$.	
	
	
Note that for $k=3$, the inequalities above give $n+2 \le f(n, 3) \le n+3$. 
We used a punctured version of the Combinatorial Nullstellensatz, developed by Ball and Serra \cite{ball-serra} to show that the upper bound is tight in this case. We also improve the lower bound for $k \ge 4$.

\begin{theorem}\label{thm_3}
For $n \ge 2$, $$f(n, 3)=n+3.$$
For $k \ge 4$ and $n \ge 3$,
$$n+k+1 \le f(n, k) \le n + \binom{k}{2}.$$
\end{theorem}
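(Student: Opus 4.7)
The upper bound is realized by the explicit construction in the introduction: the $n$ hyperplanes $\{x_i = 1\}$ together with $k-t$ copies of $\sum_j x_j = t$ for $t = 1, \ldots, k-1$ form an almost $k$-cover of size $n + \binom{k}{2}$, specializing to $n+3$ when $k = 3$. For the lower bounds, the plan is to associate to any almost $k$-cover $H_1, \ldots, H_m$ (where $H_i = \{L_i(x) = 0\}$ for affine forms $L_i(x) = \langle a_i, x \rangle - b_i$) the product polynomial $P(x) = \prod_{i=1}^{m} L_i(x)$. Since no $H_i$ passes through the origin, $P(\vec{0}) = \prod_i (-b_i) \neq 0$; and since at every $v \in Q^n \setminus \{\vec{0}\}$ at least $k$ of the $L_i$ lie in the maximal ideal at $v$, the polynomial $P$ vanishes at $v$ with Hasse multiplicity at least $k$. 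This recasts the hyperplane-counting problem as a degree bound for a polynomial with prescribed high-order vanishing.

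For $k = 3$, I would assume $m = n+2$ and invoke the Ball--Serra Punctured Combinatorial Nullstellensatz with $A_i = \{0, 1\}$ and $B_i = \{0\}$. The multiplicity-$3$ hypothesis should provide identities relating the coefficients of monomials close to the top degree $n+2$ of $P$ (such as the coefficients of $x_1 \cdots x_n \cdot x_i x_j$) to $P(\vec{0})$ and its low-order partial derivatives at $\vec{0}$. I would then combine these identities with the factored form of $P$ as a product of $n+2$ linear forms with nonzero constant terms to produce an overdetermined system whose inconsistency yields the contradiction. A useful intermediate step is to compare $P$ with its multilinear reduction $\widetilde{P} = P(\vec{0}) \prod_j (1 - x_j)$ modulo $\{x_j^2 - x_j\}$, which already determines $P$ at all points but leaves the two ``extra'' degrees of freedom that the multiplicity hypothesis must contradict.

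For $k \geq 4$, I would bootstrap from the $k = 3$ case via the trivial removal bound $f(n, k) \geq f(n, k-1) + 1$, which iterates to $f(n, k) \geq n + k$. To gain the extra $+1$, I would exploit the rigidity that results if equality $f(n,k)=n+k$ held: every single-hyperplane deletion would have to leave a minimum almost $(k-1)$-cover, cascading down to a minimum almost $3$-cover whose structure must be analyzed; alternatively, one could reapply Ball--Serra directly to $P$ with the full multiplicity-$k$ hypothesis and attempt to squeeze one extra unit of degree. The main obstacle is the $k = 3$ base case, where a naive Ball--Serra application only recovers the known bound $n + 2$: the key is to choose the right auxiliary polynomial (possibly $P$ multiplied by a low-degree monomial, or a specific partial derivative of $P$) so that the coefficient identities from the punctured version become genuinely informative rather than vacuous.
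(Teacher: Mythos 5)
Your overall framing is right---the product polynomial $P=\prod_i L_i$, the Ball--Serra punctured Nullstellensatz with $S_i=\{0,1\}$ and $D_i=\{0\}$, and the removal bound $f(n,k)\ge f(n,k-1)+1$---but the proposal stops short of the two computations that actually carry the theorem, and in both places the gap is real. For $k=3$ you never extract the contradiction; you even hedge that a naive Ball--Serra application ``only recovers the known bound $n+2$'' and speculate about needing a cleverer auxiliary polynomial. In fact the direct application does work, and the missing step is this: writing $P=\sum_{i\le j\le k} g_ig_jg_kh_{ijk}+u\prod_i(x_i-1)$ with $\deg u\le 2$, every term $g_ig_jg_kh_{ijk}$ has all partial derivatives of order $\le 2$ vanishing on the whole cube (since $x_i(x_i-1)=0$ there), so the multiplicity-$3$ vanishing of $P$ on $Q^n\setminus\{\vec{0}\}$ transfers entirely to $h=u\prod_i(x_i-1)$. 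Evaluating $h$, $\partial h/\partial x_i$, and $\partial^2 h/\partial x_i\partial x_j$ at the points $e_i$ and $e_i+e_j$ (which is why $n\ge 2$ is needed) yields $u(e_i)=u(e_i+e_j)=0$ and $\partial u/\partial x_i(e_j)=0$ for all $i,j$ including $i=j$; a polynomial of degree at most $2$ with these constraints is identically zero, contradicting $u(\vec{0})=\pm P(\vec{0})\ne 0$. Without this derivation your ``overdetermined system whose inconsistency yields the contradiction'' is only a hope.

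The second gap is in the $k\ge 4$ bound. Iterating the removal inequality from $f(n,3)=n+3$ gives only $f(n,k)\ge n+k$, one short of the claimed $n+k+1$, and your plan to ``exploit the rigidity'' of a hypothetical minimum cover is not an argument. The paper's route is to establish a new base case, $f(n,4)\ge n+5$ for $n\ge 3$, by running the same punctured-Nullstellensatz scheme with $m=n+4$ and a remainder $u$ of degree at most $4$: the multiplicity-$4$ conditions at $e_i$, $e_i+e_j$, and $e_i+e_j+e_k$ (whence the hypothesis $n\ge 3$) determine every coefficient of $u$ through a linear system and force $u(e_i+e_j+e_k)=2\ne 0$, a contradiction. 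Only then does the removal bound give $f(n,k)\ge f(n,4)+(k-4)=n+k+1$. This quartic computation is a substantial, separate piece of work, not a routine corollary of the cubic case, and it is absent from your proposal.
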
	

Our second result shows that for fixed $n$ and the multiplicity $k \rightarrow \infty$, the aforementioned upper bound $f(n, k) \le n+ \binom{k}{2}$ is indeed far from being tight. Indeed $f(n , k) \sim c_n k$ when $k \rightarrow \infty$. Note that $f(n, k)$ is the optimum of an integer program. We consider the following linear relaxation of it: we would like to assign to every affine hyperplane $H$ in $\mathbb{R}^n$ a non-negative weight $w(H)$, with the constraints
$$\sum_{\vec{v} \in H} w(H) \ge k,~~~\textrm{for~every~}\vec{v} \in Q^n \setminus \{\vec{0}\},$$
and 
$$\sum_{\vec{0} \in H} w(H) =0,$$
such that $\sum_H w(H)$ is minimized. Such an assignment $w$ of weights is called a {\it fractional almost $k$-cover} of $Q^n$. Denote by $f^*(n, k)$ the minimum of $\sum_H w(H)$, i.e. the minimum size of a fractional almost $k$-cover. We are able to  determine the precise value of $f^*(n, k)$ for every value of $n$ and $k$. 
\begin{theorem}\label{thm_frac}
For every $n$ and $k$, 
$$f^*(n, k)=\left(1+\frac{1}{2}+\cdots + \frac{1}{n}\right) k.$$
It implies that for fixed $n$ and $k \rightarrow \infty$,
$$f(n, k)= \left(1+\frac{1}{2}+\cdots + \frac{1}{n}+o(1)\right) k,$$
which grows linearly in $k$.
\end{theorem}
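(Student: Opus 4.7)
The plan is to establish $f^*(n,k)=H_nk$ where $H_n:=1+\tfrac{1}{2}+\cdots+\tfrac{1}{n}$ via linear programming duality, then deduce $f(n,k)=(H_n+o(1))k$ by rounding. Since the primal LP is homogeneous in $k$, it suffices to prove $f^*(n,1)=H_n$. For the upper bound, I will exhibit an explicit fractional almost $1$-cover: place weight $w_S=\tfrac{1}{|S|\binom{n}{|S|}}$ on the affine hyperplane $H_S\colon \sum_{i\in S}x_i=1$ for each nonempty $S\subseteq[n]$. The total weight is $\sum_{s=1}^n\binom{n}{s}\cdot\tfrac{1}{s\binom{n}{s}}=H_n$. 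Using the Beta identity $\tfrac{1}{s\binom{n}{s}}=\int_0^1 x^{s-1}(1-x)^{n-s}\,dx$ and the binomial theorem, every $v\in Q^n\setminus\{\vec{0}\}$ is covered with total weight exactly $1$, while $\vec{0}$ is not covered at all since no $H_S$ passes through the origin.

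For the matching lower bound, I propose the dual certificate $y_v=\tfrac{1}{|v|_1\binom{n}{|v|_1}}$ for $v\in Q^n\setminus\{\vec{0}\}$, which satisfies $\sum_v y_v=H_n$. By weak LP duality, $f^*(n,1)\ge H_n$ will follow once dual feasibility is verified: for every affine hyperplane $H=\{\vec{x}\in\mathbb{R}^n:\sum_ia_ix_i=c\}$ with $c\ne 0$,
\[
\sum_{v\in H\cap Q^n}\frac{1}{|v|_1\binom{n}{|v|_1}}\;=\;\sum_{\emptyset\ne S\subseteq[n],\,\sum_{i\in S}a_i=c}\frac{1}{|S|\binom{n}{|S|}}\;\le\;1.
\]
This is the LYM-type inequality for subset sums announced in the abstract, and is the technical crux of the theorem.

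Proving this inequality will be the main obstacle. A short Beta-integral reduction lets us assume $a_i\ne 0$ for all $i$: coordinates with $a_i=0$ can be integrated out without changing the sum. The left-hand side then equals $\int_0^1\mathbb{P}_x(\sum_ia_iX_i=c)/x\,dx$ with $X_i\sim\mathrm{Bernoulli}(x)$ iid, and by a chain-type argument also equals $\mathbb{E}_\sigma\bigl[\sum_{s:T_s=c}\tfrac{1}{s}\bigr]$ over a uniformly random permutation $\sigma$ with partial sums $T_s=a_{\sigma(1)}+\cdots+a_{\sigma(s)}$. The subtle point is that no per-chain bound suffices: a single $\sigma$ can contribute as much as $H_n$ (e.g.\ when one $a_i=c$ and the remaining coefficients are tiny), so the argument must genuinely exploit averaging. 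In the generic case where $\cF=\{S:\sum_{i\in S}a_i=c\}$ is an antichain, classical LYM already delivers the stronger $\sum_S 1/\binom{n}{|S|}\le 1$; the degenerate regime is the difficulty. I plan to attack it by induction on $n$: conditioning on $X_n$ gives a recurrence relating the $n$-dimensional sum to $(n-1)$-dimensional analogues at the shifted values $c$ and $c-a_n$, together with correction terms of classical LYM form. Should the bookkeeping become unwieldy, a backup is to work directly with the càdlàg coupling $t\mapsto\sum_{i:U_i\le t}a_i$ for iid uniform $U_i$ and bound the log-integrated occupation time of the value $c$.

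Granting the LYM-type inequality, LP scaling yields $f^*(n,k)=H_nk$ for all $n$ and $k$. For the integer version, the lower bound $f(n,k)\ge f^*(n,k)=H_nk$ is automatic. For the matching upper bound, scale the fractional cover by $k$ and take $\lceil kw_S\rceil$ copies of each $H_S$: since only $2^n-1$ hyperplanes carry weight, the rounding overhead is at most $2^n-1$, giving $f(n,k)\le H_nk+2^n-1$. Hence $f(n,k)=(H_n+o(1))k$ for fixed $n$ as $k\to\infty$.
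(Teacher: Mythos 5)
Your overall architecture is sound and matches the paper's: the explicit fractional cover with weight $\tfrac{1}{|S|\binom{n}{|S|}}$ on $\sum_{i\in S}x_i=1$ (your Beta-integral verification that each vertex is covered with total weight exactly $1$ is correct), the dual certificate $y_v=\tfrac{1}{t\binom{n}{t}}$, and the reduction of everything to the subset-sum LYM inequality. Your rounding $\lceil kw_S\rceil$ for the integral upper bound is in fact cleaner than the paper's route (which uses an lcm divisibility condition plus monotonicity of $f(n,\cdot)$) and gives the explicit bound $f(n,k)\le H_nk+2^n-1$. But there is a genuine gap: the LYM-type inequality, which you correctly identify as the crux, is never proved. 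You reformulate it (correctly) as $\mathbb{E}_\sigma\bigl[\sum_{s:T_s=c}\tfrac{1}{s}\bigr]\le 1$ and correctly observe that no per-permutation bound can work, but the two attacks you offer --- an induction on $n$ by conditioning on $X_n$, and an ``occupation time'' bound --- are only announced, not executed. The induction in particular does not close in any obvious way: conditioning on $X_n$ produces the integrals $\int_0^1 \mathbb{P}_x^{(n-1)}(c-a_n)\,dx$ and $\int_0^1(1-x)\mathbb{P}_x^{(n-1)}(c)\,\tfrac{dx}{x}$, which are not of the same form as the quantity being bounded, so there is no induction hypothesis to invoke.

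The paper closes this gap with a cycle-lemma argument that fits your permutation reformulation exactly but adds the missing idea: charge a permutation $\sigma$ to the vertex $S$ (with $|S|=t$) only when $\sigma$ begins with the elements of $S$ \emph{and} every proper prefix sum $\sum_{k\le j}a_{\sigma(k)}$, $j<t$, is strictly less than $1$. The strictness forces distinct covered vertices to receive disjoint sets of permutations (a shorter prefix summing to $1$ would witness a different covered vertex), so the total charge is at most $n!$. The cycle lemma --- arrange the $t$ values $a_i$, $i\in S$, on a circle, subtract $1/t$ from each so they sum to $0$, and start just after the block of consecutive terms with maximal sum --- guarantees that each of the $(t-1)!$ cyclic orders admits a valid starting point, so $S$ receives at least $(t-1)!\,(n-t)!$ permutations. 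Summing $\sum_S(t-1)!(n-t)!\le n!$ is precisely the desired inequality. Without this (or some substitute), your proof of the lower bound, and hence of the theorem, is incomplete.
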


As an intermediate step of proving Theorem \ref{thm_frac}, we proved the following theorem, which can be viewed as an analogue of the well-known Lubell-Yamamoto-Meshalkin inequality \cite{bollobas, lubell, meshalkin, yamamoto} for subset sums. Moreover the inequality is tight for all non-zero binary vectors $\vec{a}=(a_1, \cdots, a_n)$.
\begin{theorem}\label{lym_analogue}
	Given $n$ real numbers $a_1, \cdots, a_n$, let 
	$$\mathcal{A}=\{S: \emptyset \neq S \subset[n], \sum_{i \in S} a_i = 1\}.$$
	Then $$\sum_{S \in \mathcal{A}}  \frac{1}{|S|\binom{n}{|S|}} \le 1.$$
	Equivalently, let $\mathcal{A}_t=\{S: S \in \mathcal{A}, |S|=t\}$, then 
	$$\sum_{t=1}^n \frac{|\mathcal{A}_t|}{t\binom{n}{t}} \le 1.$$
\end{theorem}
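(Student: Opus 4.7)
The plan is to adapt the Lubell-style proof of the classical LYM inequality, using a cycle-lemma statement about prefix sums to absorb the extra $1/|S|$ factor. Let $\pi$ be a uniformly random permutation of $[n]$. For each $S \in \mathcal{A}$ with $t = |S|$, call an ordering of the elements of $S$ \emph{good} if none of its proper prefix sums (weighted by the $a_i$) equals $1$, and let $E_S$ be the event that $\{\pi(1), \dots, \pi(t)\} = S$ and that the induced ordering $(\pi(1), \dots, \pi(t))$ is good. Writing $q(S)$ for the fraction of orderings of $S$ that are good, we have $P(E_S) = q(S)/\binom{n}{t}$.

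First I would show the events $\{E_S\}_{S \in \mathcal{A}}$ are pairwise disjoint. If $E_S$ and $E_{S'}$ both held with $|S| < |S'|$, then $S$ would be a proper prefix of $S'$ in $\pi$; since $\sum_{i \in S} a_i = 1$, the ordering of $S'$ would then have a proper prefix summing to $1$, contradicting its goodness. Summing therefore yields
\[
\sum_{S \in \mathcal{A}} \frac{q(S)}{\binom{n}{|S|}} \;=\; \sum_{S \in \mathcal{A}} P(E_S) \;\le\; 1,
\]
reducing the theorem to the key estimate $q(S) \ge 1/|S|$.

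To prove $q(S) \ge 1/t$ for $|S| = t$, I would partition the $t!$ orderings of $S$ into $(t-1)!$ cyclic classes of size $t$ and exhibit, in each class, at least one good ordering. Fix a representative $(b_1, \dots, b_t)$ of such a class, set $s_j = b_1 + \cdots + b_j$ (so $s_t = 1$), and let $k^* \in \{0, 1, \dots, t-1\}$ be the \emph{smallest} index at which $s_{k^*}$ attains $M := \max_{0 \le i \le t-1} s_i$. I claim the cyclic shift beginning at position $k^* + 1$ is good: its $j$-th partial sum equals $s_{k^*+j} - M \le 0 < 1$ for $1 \le j < t - k^*$; equals $1 - M$ when $j = t - k^*$, which differs from $1$ unless $M = 0$, in which case $k^* = 0$ and the index is $j = t$ (outside the proper-prefix range); and equals $(1 - M) + s_{k^*+j-t}$ for $t - k^* < j \le t - 1$, which could hit $1$ only if $s_c = M$ for some $c = k^* + j - t \in \{1, \dots, k^*-1\}$, contradicting the minimality of $k^*$.

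Combining the two steps gives $\sum_{S \in \mathcal{A}} \frac{1}{|S|\binom{n}{|S|}} \le \sum_{S \in \mathcal{A}} \frac{q(S)}{\binom{n}{|S|}} \le 1$. The main obstacle is the cyclic-shift claim; the decisive choice there is to take $k^*$ as the \emph{first} rather than last attainer of the maximum $M$, which simultaneously precludes later partial sums equalling $M+1$ (none exist, since $M$ is the maximum) and wrap-around partial sums reattaining $M$ (none exist earlier, by minimality of $k^*$).
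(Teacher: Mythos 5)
Your proof is correct and follows essentially the same route as the paper's: you associate to each set $S\in\mathcal{A}$ a family of permutations of $[n]$ beginning with an admissible ordering of $S$, prove these families are disjoint because a smaller set in $\mathcal{A}$ would force a forbidden proper prefix sum equal to $1$, and establish the count $q(S)\ge 1/|S|$ by a cycle lemma. The only differences are cosmetic: you phrase the double count probabilistically and select the starting point as the first attainer of the maximum prefix sum, whereas the paper counts permutations directly and starts just after a consecutive block of maximum sum; both are the same cycle-lemma idea and both are carried out correctly.
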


The rest of the paper is organized as follows. In the next section, we resolve the almost $3$-cover case, and show that the answer to the almost $4$-cover problem has only two possible values, thus proving Theorem \ref{thm_3}. Section \ref{sec_frac} contains the proofs of Theorems \ref{thm_frac} and \ref{lym_analogue}. The final section contains some concluding remarks and open problems.

\section{Almost $3$-covers of the $n$-cube}\label{sec_3-cover}
The following Punctured Combinatorial Nullstellensatz was proven by Ball and Serra (Theorem 4.1 in \cite{ball-serra}). Let $\mathbb{F}$ be a field and $f$ be a non-zero polynomial in  $\mathbb{F}[x_1, \cdots, x_n]$. We say $\vec{a}=(a_1, \cdots, a_n)$ is a {\it zero of  multiplicity $t$} of $f$, if $t$ is the minimum degree of the terms that occur in $f(x_1+a_1, \cdots, x_n+a_n)$. 
\begin{lemma} \label{lem_pcn}
 For $i=1, \cdots, n$, let $D_i \subset S_i \subset \mathbb{F}$ and $g_i=\prod_{s \in S_i} (x_i-s)$ and $\ell_i=\prod_{d \in D_i} (x_i-d)$.
	If $f$ has a zero of multiplicity at least $t$ at all the common zeros of $g_1, \cdots, g_n$, except at least one point of $D_1 \times \cdots \times D_n$ where it has a zero of multiplicity less than $t$, then there are polynomials $h_\tau$ satisfying $\deg(h_\tau) \le \deg(f)-\sum_{i \in \tau} \deg(g_i)$, and a non-zero polynomial $u$ satisfying $\deg(u) \le \deg(f)-\sum_{i=1}^n (\deg(g_i)-\deg(\ell_i))$, such that 
	$$f=\sum_{\tau \in T(n, t)} g_{\tau(1)}\cdots g_{\tau(t)}h_{\tau}+u \prod_{i=1}^n \frac{g_i}{\ell_i}.$$
	Here $T(n, t)$ indicates the set of all non-decreasing sequences of length $t$ on $[n]$.
\end{lemma}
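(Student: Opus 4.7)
The plan is to reduce the Punctured Combinatorial Nullstellensatz to the (non-punctured) multiplicity version by constructing a correction polynomial $u$ that absorbs the defect at the punctured points. First, I would record the classical multiplicity Combinatorial Nullstellensatz: if $F \in \mathbb{F}[x_1,\ldots,x_n]$ vanishes to order at least $t$ at every point of $S_1\times \cdots \times S_n$, then $F \in I^t$ (with $I := (g_1,\ldots,g_n)$), and in fact $F$ can be written as $F = \sum_{\tau \in T(n,t)} g_{\tau(1)}\cdots g_{\tau(t)} h_\tau$ with $\deg h_\tau \le \deg F - \sum_{i \in \tau} \deg g_i$. This is established by iterated polynomial division (dividing out $g_i$'s one variable at a time), together with the identification $I^t = \bigcap_{\vec{a}} \mathfrak{m}_{\vec{a}}^t$, which follows from the Chinese Remainder Theorem and pairwise coprimality of the maximal ideals $\mathfrak{m}_{\vec{a}}$.

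Setting $P := \prod_{i=1}^n g_i/\ell_i$, it then suffices to produce a polynomial $u$ of degree at most $\deg f - \deg P$ such that $F := f - uP$ vanishes to order $\ge t$ at every grid point; applying the multiplicity CN to $F$ yields the $h_\tau$'s. To construct $u$, I work in the ring $R := \mathbb{F}[x_1,\ldots,x_n]/I^t$, which by CRT decomposes as $\prod_{\vec{a} \in S_1\times \cdots \times S_n} \mathbb{F}[x_1,\ldots,x_n]/\mathfrak{m}_{\vec{a}}^t$, a product of order-$(t{-}1)$ jet rings at the grid points. At each grid point $\vec{a} \in D_1 \times \cdots \times D_n$, $P$ is nonvanishing, so $\bar P$ is a unit in the local ring, and I prescribe $\bar u$'s jet to be that of $f/P$. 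At each grid point outside $D_1 \times \cdots \times D_n$, by hypothesis $f$ vanishes to order $\ge t$, so its jet in $R$ is zero and I set $\bar u$'s jet to zero as well. These componentwise jets assemble via CRT into a single class $\bar u \in R$ satisfying $\bar u\,\bar P = \bar f$; equivalently $f - uP \in I^t$ for any polynomial lift $u$.

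The main obstacle is controlling the degree of $u$: the CRT produces a class in $R$ but not a bounded-degree representative. I would proceed by taking the canonical representative of $\bar u$ with respect to a carefully chosen standard-monomial basis of $R$ (obtained by iterated reduction, built on top of the order-$t$ nilpotent structure), and then absorb any excess degree by further adjustments of the $h_\tau$'s. Tracking degrees through these reductions, using that $\deg f \ge \deg P$ is forced by $f$ having multiplicity $\ge t$ at most grid points, yields both degree bounds. Finally, $u$ is nonzero because the hypothesis supplies a punctured point $\vec{d}$ where $f$ has multiplicity strictly less than $t$: at $\vec{d}$, $\bar f$ has a nonzero jet, and since $\bar P$ is a unit there, $\bar u = \bar f/\bar P$ has a nonzero jet as well, so $u \not\equiv 0$.
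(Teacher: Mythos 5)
First, a point of context: the paper does not prove this lemma at all --- it is quoted as Theorem 4.1 of Ball and Serra \cite{ball-serra} --- so there is no in-paper proof to compare against, and your proposal has to be judged on its own. The soft parts of your argument are correct. The identification $I^t=\bigcap_{\vec{a}}\mathfrak{m}_{\vec{a}}^t$ holds (since $I=\prod_{\vec{a}}\mathfrak{m}_{\vec{a}}$ and powers of pairwise comaximal ideals stay comaximal), the CRT construction of a class $\bar u$ with $\bar u\bar P=\bar f$ in $\mathbb{F}[x_1,\ldots,x_n]/I^t$ is sound because $P=\prod_i g_i/\ell_i$ is a unit in each local factor at a point of $D_1\times\cdots\times D_n$, and the nonvanishing of $u$ at a punctured point of multiplicity less than $t$ follows exactly as you say.

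The genuine gap is the degree bound $\deg(u)\le\deg(f)-\sum_{i=1}^n(\deg(g_i)-\deg(\ell_i))$, which you flag as ``the main obstacle'' and then dispatch with a ``carefully chosen standard-monomial basis'' plus ``absorbing excess degree by adjusting the $h_\tau$'s.'' This does not work as described. The polynomial $u$ is determined only modulo the colon ideal $(I^t:P)$ (changing $u$ by some $v$ with $vP\in I^t$ is precisely what adjusting the $h_\tau$'s amounts to), so the real question is whether the class of $u$ modulo $(I^t:P)$ contains a representative of degree at most $\deg f-\deg P$. Reducing against a standard-monomial basis only bounds $\deg u$ by the top degree of the standard monomials, a quantity depending on $t$ and the $|S_i|$ but not on $\deg f$; in the extremal case $\deg f=\deg P$ the lemma demands that $u$ be a nonzero \emph{constant}, and no normal-form reduction of the kind you sketch produces that. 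This degree bound is the entire content of the lemma --- in the paper's application it is what forces $\deg u\le 2$ and drives the contradiction --- and it is where Ball and Serra do the actual work, via an induction that interleaves division by the $g_i$ with a divisibility statement of the form: a polynomial of degree less than $|S_i|$ in each $x_i$ that vanishes on $(S_1\times\cdots\times S_n)\setminus(D_1\times\cdots\times D_n)$ is divisible by $\prod_i g_i/\ell_i$. Your side remark that $\deg f\ge\deg P$ ``is forced'' is likewise a nontrivial Alon--F\"uredi-type assertion that itself needs proof. As written, the proposal establishes the existence and nonvanishing of $u$ but not its degree bound, so it does not prove the lemma.
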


This punctured Nullstellensatz will be our main tool in proving Theorem \ref{thm_3}. We start with the $k=3$ case. 
\begin{theorem}\label{thm_1}
For $n \ge 2$, $f(n, 3)=n+3$.
\end{theorem}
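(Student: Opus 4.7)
The plan is to use the Punctured Combinatorial Nullstellensatz (Lemma~\ref{lem_pcn}) to rule out any almost $3$-cover of $Q^n$ with only $n+2$ hyperplanes; combined with the construction from the Introduction achieving $n+3$, this will pin down $f(n,3)=n+3$.

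Suppose for contradiction that affine hyperplanes $H_1, \ldots, H_{n+2}$ with defining linear forms $\lambda_1, \ldots, \lambda_{n+2}$ form an almost $3$-cover, and set $f = \prod_{j=1}^{n+2} \lambda_j$, a polynomial of degree $n+2$. I would apply Lemma~\ref{lem_pcn} with $t=3$, $S_i = \{0,1\}$, and $D_i = \{0\}$, so that $g_i = x_i(x_i-1)$, $\ell_i = x_i$, and $g_i/\ell_i = x_i - 1$. At every vertex of $Q^n\setminus\{\vec 0\}$ at least three of the $\lambda_j$ vanish, so $f$ has a zero of multiplicity $\ge 3$ there, while $f(\vec 0)\neq 0$ gives multiplicity $0<3$ at the unique point of $D_1\times\cdots\times D_n = \{\vec 0\}$. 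The lemma then produces the identity
$$ f \;=\; \sum_{\tau \in T(n,3)} g_{\tau(1)}g_{\tau(2)}g_{\tau(3)}\, h_\tau \;+\; u(x)\prod_{i=1}^n (x_i-1) $$
with $\deg u \le (n+2)-n=2$ and $u\not\equiv 0$.

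The heart of the argument will be to show that the identity forces $u\equiv 0$, contradicting the lemma. For any $v\in Q^n\setminus\{\vec 0\}$, both $f$ and each summand $g_{\tau(1)}g_{\tau(2)}g_{\tau(3)}h_\tau$ vanish to order $\ge 3$ at $v$, so $u\cdot\prod_i(x_i-1)$ must too. Since $\prod_i(x_i-1)$ vanishes at $v$ to order exactly the Hamming weight $w(v)$, the polynomial $u$ must vanish to order at least $3-w(v)$ at $v$. This yields $u(e_i)=0$ and $\nabla u(e_i)=\vec 0$ for every $i$ (from weight-$1$ vertices), and $u(e_i+e_j)=0$ for all $i\neq j$ (from weight-$2$ vertices).

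Finally I would verify that no nonzero polynomial $u$ of degree $\le 2$ meets all of these conditions. Writing $u = a+\sum_i b_i x_i+\sum_{i\le j} c_{ij}x_ix_j$, the system $\partial_k u(e_i)=0$ couples the coefficients so tightly that $b_1=\cdots=b_n=:b$, $c_{ii}=-b/2$, and $c_{ij}=-b$ for $i<j$, reducing $u$ to $a+b\sum x_i-\tfrac{b}{2}(\sum x_i)^2$. The scalar equation $u(e_i)=0$ then fixes $a=-b/2$, so $u = -\tfrac{b}{2}(\sum x_i-1)^2$, and evaluating at any weight-$2$ vertex (where $\sum x_i = 2$) forces $b=0$. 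The place I expect requires care is precisely this last step: the weight-$1$ conditions alone leave a one-parameter family of admissible $u$'s, namely the multiples of $(\sum x_i-1)^2$, so the hypothesis $n\ge 2$ is essential to supply weight-$2$ vertices that kill this family.
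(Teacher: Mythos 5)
Your proposal is correct and takes essentially the same route as the paper: the same application of the Punctured Combinatorial Nullstellensatz with $g_i = x_i(x_i-1)$, $\ell_i = x_i$, the same vanishing conditions on $u$ at weight-one and weight-two vertices (hence the same role for $n\ge 2$), and the same linear algebra forcing $u\equiv 0$. Your derivation of those conditions via additivity of vanishing orders, rather than the paper's explicit partial-derivative computations of $u\prod_i(x_i-1)$, and your identification of the surviving one-parameter family as the multiples of $\left(\sum_i x_i - 1\right)^2$, are just cleaner packagings of the same steps.
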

\begin{proof}
To show that $f(n, 3)=n+3$, it suffices to establish the lower bound. We prove by contradiction. Suppose $H_1, \cdots, H_{n+2}$ are $n+2$ affine hyperplanes that form an almost $3$-cover of $Q^n$. Without loss of generality, assume the equation defining $H_i$ is  $\langle \vec{b}_i, \vec{x} \rangle = 1$, for some non-zero vector $\vec{b}_i \in \mathbb{R}^n$. Define 
$P_i=\langle \vec{b}_i, \vec{x} \rangle - 1$, and let
$$f=P_1 P_2 \cdots P_{n+2}.$$
Since $H_1, \cdots, H_{n+2}$ form an almost $3$-cover of $Q^n$, every binary vector $\vec{x} \in Q^n \setminus \{\vec{0}\}$ is a zero of multiplicity at least $3$ of the polynomial $f$. We apply Lemma \ref{lem_pcn} with 
$$D_i=\{0\},~~S_i=\{0, 1\},~~g_i=x_i(x_i-1),~~\ell_i=x_i,$$
and write $f$ in the following form:
$$f=\sum_{1 \le i \le j \le k \le n} x_i(x_i-1)x_j(x_j-1)x_k(x_k-1) h_{ijk} + u \prod_{i=1}^n (x_i-1), $$
with $\deg(u) \le \deg(f)-n = 2$.

Note that $f=0$ on $Q^n \setminus \{\vec{0}\}$. Moreover,
$$\frac{\partial f}{\partial x_i}=\sum_{j=1}^{n+2} P_1 \cdots P_{j-1} \cdot \frac{\partial P_j}{\partial x_i} \cdot P_{j+1}\cdots P_{n+2}.$$ 
Recall that $P_j$ is a polynomial of degree $1$, thus $\partial f/\partial x_i$ is just a linear combination of $P_1 \cdots \hat{P_j} \cdots P_{n+2}$. Note that removing a single hyperplane still gives an almost $2$-cover. Therefore $\partial f/\partial x_i$ vanishes on $Q^n \setminus \{\vec{0}\}$. One can similarly show that all the second order partial derivatives of $f$ vanish on $Q^n \setminus \{\vec{0}\}$ as well. More generally, if $f$ is the product of equations of the affine hyperplanes from an almost $k$-cover, then all the $j$-th order derivatives of $f$ vanish on $Q^n \setminus \{\vec{0}\}$, for $j=0, \cdots, k-1$. It is not hard to observe that $x_i(x_i-1)x_j(x_j-1)x_k(x_k-1)h_{ijk}=g_ig_jg_k h_{ijk}$ also has its $t$-th order partial derivatives vanishing on the entire cube $Q^n$, for $t \in \{0, 1, 2\}$, since $x_i(x_i-1)=0$ on $Q^n$. Therefore the following polynomial
$$h=u \prod_{i=1}^n (x_i-1)$$
has $j$-th order partial derivatives vanishing on $Q^n \setminus \{\vec{0}\}$, for $j=0, 1, 2$.

We denote by $e_i$ the $n$-dimensional unit vector with the $i$-th coordinate being $1$. By calculations,
\begin{align*}
\frac{\partial h}{\partial x_i} = \frac{\partial u}{\partial x_i} \prod_{j=1}^n (x_j-1) + u \prod_{j \ne i} (x_j-1).
\end{align*}
Therefore 
$$0=\frac{\partial h}{\partial x_i}(e_i)=(-1)^{n-1} u(e_i),$$
and this implies 
$$u(e_i)=0~~\textrm{for~}i=1, \cdots, n.$$
Furthermore, 
\begin{align*}
\frac{\partial^2 h}{\partial x_i^2} = \frac{\partial^2 u}{\partial x_i^2} \prod_{j=1}^n (x_j-1) + 2 \frac{\partial u}{\partial x_i} \prod_{j \ne i} (x_j-1).
\end{align*}
Therefore 
$$0=\frac{\partial^2 h}{\partial x_i^2}(e_i)=(-1)^{n-1} \cdot 2 \frac{\partial u}{\partial x_i}(e_i),$$
and this implies 
$$\frac{\partial u}{\partial x_i}(e_i)=0~~\textrm{for~}i=1, \cdots, n.$$
Finally, 
\begin{align*}
\frac{\partial^2 h}{\partial x_i x_j} = \frac{\partial^2 u}{\partial x_i x_j} \prod_{k=1}^n (x_k-1) + \frac{\partial u}{\partial x_i} \prod_{k \ne j} (x_k-1) + \frac{\partial u}{\partial x_j} \prod_{k \ne i} (x_k-1) + u \prod_{k \ne i, j} (x_k-1)
\end{align*}
By evaluating it on $e_i$ and $e_i+e_j$, we have
$$\frac{\partial u}{\partial x_j}(e_i)=u(e_i)=0,~~\textrm{and}~~u(e_i+e_j)=0.$$
Summarizing the above results $u$ is a polynomial of degree at most $2$, satisfying: (i) $u=0$ at $e_i$ and $e_i+e_j$; (ii) $\partial u/\partial x_i=0$ at $e_j$ (possible to have $i=j$). We define a new single-variable polynomial $w$,
$$w(x)=u(x \cdot e_i+ e_j).$$
Then $\deg(w) \le 2$, and $w(0)=w(1)=w'(0)=0$, which implies $w \equiv 0$. Let 
$$u=\sum_{i} a_{ii}x_i^2 + \sum_{i<j} a_{ij}x_i x_j + \sum_i b_i x_i + c.$$ 
This gives for all $i \neq j$, $$a_{ii}=0,~~a_{ij}+b_i=0,~~a_{ii}+b_i+c=0.$$
On other other hand $\partial u/\partial x_i=0$ at $e_i$ gives
$$2a_{ii}+b_i=0.$$ 
It is not hard to derive from these equalities that
$$a_{ii}=a_{ij}=b_i=c=0.$$
Therefore $u \equiv 0$. But then we have 
$f(\vec{0})=0$, which contradicts the assumption that $\vec{0}$ is not covered by any of the $n+2$ affine hyperplanes. Therefore $f(n, 3)=n+3$ for $n \ge 2$. Note that $f(1, 3)=3$ and the proof does not work for $n=1$ because $e_i+e_j$ does not exist in a $1$-dimensional space.
\end{proof}

Note that Theorem \ref{thm_1} already implies $f(n, 4)\ge f(n, 3)+1=n+4$ for all $n \ge 2$. For $n=2$, it is straightforward to check that $f(2, 4)=6$, with an optimal almost $4$-cover $x_1=1$ (twice), $x_2=1$ (twice), and $x_1+x_2=1$ (twice). However for $n \ge 3$, we can improve this lower bound by $1$.

\begin{theorem} \label{thm_2}
For $n \ge 3$, $f(n, 4) \in \{n+5, n+6\}$. Moreover, for $3 \le n \le 5$, $f(n, 4)=n+5$.
\end{theorem}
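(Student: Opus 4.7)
The plan is to adapt the Punctured Nullstellensatz argument from Theorem~\ref{thm_1}, combined with an induction on $n$ and a careful line-restriction analysis, for the lower bound; and to give explicit constructions for the upper bound when $n \in \{3,4,5\}$.

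Suppose for contradiction that $H_1,\dots,H_{n+4}$ form an almost $4$-cover of $Q^n$. Normalizing $P_i(\vec x) = \langle \vec b_i,\vec x\rangle - 1$ and setting $f = \prod_{i=1}^{n+4} P_i$, the polynomial $f$ has multiplicity at least $4$ at each $\vec v \in Q^n\setminus\{\vec 0\}$. Applying Lemma~\ref{lem_pcn} with $D_i=\{0\}$, $S_i=\{0,1\}$, $g_i=x_i(x_i-1)$, $\ell_i=x_i$ yields
\[
f \;=\; \sum_{\tau\in T(n,4)} g_{\tau(1)} g_{\tau(2)} g_{\tau(3)} g_{\tau(4)} h_\tau \;+\; uL, \qquad L = \prod_{i=1}^n(x_i-1),
\]
with $\deg u\le 4$; the removing-hyperplanes argument of Theorem~\ref{thm_1} then shows every partial derivative of $uL$ of order at most $3$ vanishes on $Q^n\setminus\{\vec 0\}$. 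Because $L$ is multilinear and $(\partial^\alpha L)(\vec a)$ is nonzero only when $\alpha\in\{0,1\}^n$ with $\supp(\vec a)\subseteq\supp(\alpha)$, a multivariate Leibniz computation at the low-Hamming-weight points forces $u$ to have a zero of order at least $3$ at every $e_i$, order at least $2$ at every $e_i+e_j$, and order at least $1$ at every $e_i+e_j+e_k$.

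It remains to prove $u(\vec 0)=0$, which gives the contradiction via $f(\vec 0) = u(\vec 0)L(\vec 0) = 0$ against $\prod_i P_i(\vec 0) = (-1)^{n+4}\ne 0$. I proceed by induction on $n$. Line restrictions, combined with $\deg u\le 4$, first force $u\equiv 0$ on every line $e_i$--$e_j$ (forced vanishing $3+3>4$), every line $e_i$--$(e_j+e_k)$ ($3+2>4$), and every axis-parallel line $e_i$--$(e_i+e_j)$ ($3+2>4$). For the base case $n=3$, expected to be the principal technical obstacle, restricting $u$ to each coordinate plane $\{x_l=0\}$ gives a degree-$\le 4$ polynomial in two variables with triple zeros at two corners of a triangle and a double zero at the third; a Bezout/plane-curve enumeration of reducible quartic curves pins $u|_{x_l=0}$ to the form $c_l(\sum_{i\ne l}x_i-1)^2\prod_{i\ne l}(x_i-1)$, and comparing the three restrictions on pairwise intersections forces $c_1=c_2=c_3=:c$; evaluating the resulting explicit form of $u$ at $(1,1,1)$ and invoking the single-zero condition there then gives $c=0$, hence $u\equiv 0$. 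For the inductive step $n\ge 4$, the restriction $u|_{x_l=0}$ is a degree-$\le 4$ polynomial in $n-1$ variables inheriting exactly the analogous Hermite and line-vanishing conditions of the $(n-1)$-variable problem, so by the inductive hypothesis $u|_{x_l=0}(\vec 0)=0$, and this value equals $u(\vec 0)$.

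For the matching upper bound $f(n,4)\le n+5$ when $n\in\{3,4,5\}$, I exhibit explicit almost $4$-covers of size $n+5$. For $n=3$: the eight hyperplanes $\{x_i=1\}_{i=1}^3$, $\{x_i+x_j=1\}_{i<j}$, $\{\sum x_i=1\}$, and $\{\sum x_i=3\}$. For $n=4$: the nine hyperplanes $\{x_i=1\}_{i=1}^4$, $\{\sum_{j\ne i}x_j=1\}_{i=1}^4$, and $\{\sum x_i=3\}$. For $n=5$: the ten hyperplanes $\{x_i=1\}_{i=1}^5$ together with the five cyclic triple-sum hyperplanes $\{x_i+x_{i+1}+x_{i+2}=1\}_{i\in\Zbb/5\Zbb}$. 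In each case a direct Hamming-weight check confirms that every nonzero $\vec v \in Q^n$ is covered at least four times while $\vec 0$ remains uncovered, completing the proof.
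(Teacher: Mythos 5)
Your overall framework for the lower bound is the same as the paper's (Punctured Nullstellensatz with $t=4$, reduction to a degree-$\le 4$ polynomial $u$ with prescribed vanishing orders $3,2,1$ at the weight-$1$, weight-$2$, weight-$3$ vertices respectively), and your upper-bound constructions for $n=3,4,5$ are correct -- I checked the coverage counts; the $n=3$ and $n=4$ covers differ from the paper's but work equally well, and the $n=5$ one coincides with the paper's. Where you diverge is the endgame: the paper simply writes out the coefficients of $u$, solves the resulting linear system (normalizing $u(\vec 0)=1$), and finds $u(e_i+e_j+e_k)=2\neq 0$; you instead propose restriction to coordinate hyperplanes plus induction on $n$. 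Your inductive step is fine, since the vanishing-order conditions at points of $\{x_l=0\}$ do descend to $u|_{x_l=0}$, and your classification of the two-variable restrictions as $c_l\bigl(\sum_{i\ne l}x_i-1\bigr)^2\prod_{i\ne l}(x_i-1)$ is correct (it follows from three line restrictions after factoring out $x_1+x_2-1$).

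The gap is in the $n=3$ base case. A degree-$\le 4$ polynomial in three variables is \emph{not} determined by its restrictions to the three coordinate planes: the coefficients of the four monomials divisible by $x_1x_2x_3$ (namely $x_1x_2x_3$, $x_1^2x_2x_3$, $x_1x_2^2x_3$, $x_1x_2x_3^2$) are left completely free. So there is no ``resulting explicit form of $u$'' to evaluate at $(1,1,1)$. Concretely, the determined part contributes $c$ to $u(1,1,1)$, so the single condition $u(1,1,1)=0$ reads $c+b_{123}+a_{1123}+a_{1223}+a_{1233}=0$, which does not force $c=0$. To close the argument you must first pin down those four coefficients using conditions you have available but do not invoke: the transverse derivative conditions (e.g.\ $\partial u/\partial x_3=0$ at $(1,1,0)$ and $\partial^2 u/\partial x_1\partial x_2=0$ at $e_3$), or equivalently the vanishing of $u$ on the lines through $e_i$ and $e_j+e_k$, which do leave the coordinate planes. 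Doing so yields $b_{123}=-11c$ and $a_{iijk}=4c$, whence $u(1,1,1)=2c$ and $c=0$ -- but at that point you have essentially reproduced the paper's coefficient computation. As written, the base case does not reach its conclusion. (A minor omission: the containment $f(n,4)\le n+6$ needed for the first claim should be cited from the general $n+\binom{k}{2}$ construction in the introduction.)
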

\begin{proof}
Suppose $n \ge 3$, we would like to prove by contradiction that $n+4$ affine hyperplanes cannot form an almost $4$-cover of $Q^n$. Following the notations in the previous proof, we have 
$$P_1 \cdots P_{n+4}=f=\sum_{1 \le i \le j \le k \le l \le n} g_ig_jg_kg_l h_{ijkl} + u \prod_{i=1}^n (x_i-1),$$
with $\deg(u) \le 4$. Following similar calculations, $u$ satisfies the following relations: (i) $u=0$ at $e_i$, $e_i+e_j$ and $e_i+e_j+e_k$ for distinct $i, j, k$; (ii) $\partial u/\partial x_i=0$ at $e_j$  and $e_j+e_k$ for distinct $j, k$ ($i=j$ or $i=k$ possible); (iii) $\partial^2 u/\partial x_i^2=0$ at $e_j$ ($i=j$ possible); (iv) $\partial^2 u/\partial x_i \partial x_j=0$ at $e_k$ ($i=k$ or $j=k$ possible). Suppose
$$u=\sum a_{iiii}x_i^4 + \sum a_{iiij}x_i^3x_j  + \cdots + \sum b_{iii}x_i^3 + \cdots + \sum c_{ii}x_i^2 + \cdots+\sum d_i x_i + e.$$ 
Since $f(\vec{0})=(-1)^{n+4}=(-1)^n$, we know that $u(\vec{0})=1$ and thus $e=1$.

Let $w(x)=u(x \cdot e_i+ e_j)$. Then $w(0)=w(1)=w'(0)=w'(1)=w''(0)=0$. Since $w(x)$ has degree at most $4$, we immediately have $w \equiv 0$. This gives 
\begin{equation}
a_{iiii}=0, \vspace{-0.1cm}
\end{equation}
\begin{equation}
a_{iiij}+b_{iii}=0.
\end{equation}
\begin{equation}\label{eq1}
a_{iijj}+b_{iij}+c_{ii}=0.
\end{equation}
\begin{equation}\label{eq2}
a_{ijjj}+b_{ijj}+c_{ij}+d_i=0.
\end{equation}
\begin{equation}
a_{jjjj}+b_{jjj}+c_{jj}+d_j+1=0
\end{equation}
Using $u(e_i)=0$, $\partial u/\partial x_i(e_i)=0$ and $\partial^2 u/\partial x_i^2 (e_i)=0$, we have
$$a_{iiii}+b_{iii}+c_{ii}+d_i+1=0,$$
$$4a_{iiii}+3b_{iii}+2c_{ii}+d_i=0,$$
$$12a_{iiii}+6b_{iii}+2c_{ii}=0.$$

Using $a_{iiii}=0$, we can solve this system of linear equations and get $b_{iii}=-1$, $c_{ii}=3$, $d_i=-3$. This implies $a_{iiij}=1$. Plugged into the equations \eqref{eq1} and \eqref{eq2}, we have:
$$a_{iijj}+b_{iij}=-3,$$
$$b_{iij}+c_{ij}=2.$$ 
Now using $\partial^2 u/\partial x_i \partial x_j(e_i)=0$, we have $3a_{iiij}+2b_{iij}+c_{ij}=0$, which gives
$$2b_{iij}+c_{ij}=-3.$$
The three linear equations above give $b_{iij}=-5$, $c_{ij}=7$, $a_{iijj}=2$.

For $n \ge 3$, we can also utilize the relation $\partial^2 u/(\partial x_i \partial x_j)=0$ at $e_k$. This gives $a_{ijkk}+b_{ijk}+c_{ij}=0$, hence
$$a_{ijkk}+b_{ijk}=-7.$$
Also $\partial u/(\partial x_i)=0$ at $e_j+e_k$ simplifies to 
$$a_{ijkk}+a_{ijjk}+b_{ijk}+3=0.$$
Together they give $b_{ijk}=-11$ and $a_{ijkk}=4$. Finally, by calculations 
\begin{align*}
u(e_i+e_j+e_k)&=3a_{iiii}+6a_{iiij}+3a_{iijj}+3a_{iijk}+3b_{iii}+6b_{iij}+b_{ijk}+3c_{ii}+3c_{ij}+3d_i+e\\
&=2 \neq 0. 
\end{align*}
This gives a contradiction. Therefore for $n \ge 3$, there is no $u$ of degree at most $4$ satisfying the aforementioned relations. This shows for $n \ge 3$, $f(n, 4) \ge n+5$. The proof does not work for $n<3$ because $e_i+e_j+e_k$ does not exist in a 1-dimensional or 2-dimensional space. Since $f(n, 4) \le n+ \binom{4}{2}=n+6$, it can only be either $n+5$ or $n+6$, proving the first claim in Theorem \ref{thm_2}.

To show that $f(n, 4)=n+5$ for $3 \le n \le 5$, we only need to construct almost $4$-covers of $Q^n$ using $n+5$ affine hyperplanes. For $Q^3$, note that $x_1=1$, $x_2=1$, $x_3=1$, and $x_1+x_2+x_3=1$ form an almost $2$-cover. Doubling it gives an almost $4$-cover of $Q^3$ with $8$ affine hyperplanes. For $Q^4$, the following $9$ affine hyperplanes form an almost $4$-cover: $x_1=1$, $x_2=1$, $x_3=1$, $x_4=1$, $x_1+x_4=1$, $x_2+x_4=1$, $x_3+x_4=1$, $x_1+x_2+x_3=1$, $x_1+x_2+x_3+x_4=1$. For $Q^5$, one can take $x_i=1$ for $i=1, \cdots 5$, together with $x_{i}+x_{i+1}+x_{i+2}=1$ for $i=1, \cdots, 5$, where the addition is in $\mathbb{Z}_5$.
\end{proof}

Now we can combine these two results we just obtained to prove Theorem \ref{thm_3}.
\begin{proof}[Proof of Theorem \ref{thm_3}]
The $k=3$ case has been resolved by Theorem \ref{thm_1}. On the other hand we have 
$$f(n, k) \ge f(n, k-1)+1,$$
since removing an affine hyperplane from an almost $k$-cover gives an almost $(k-1)$-cover. Therefore for $k \ge 4$ and $n \ge 3$, 
$$f(n, k) \ge f(n, 4) + (k-4) \ge n+5+(k-4)=n+k+1.$$
The upper bound follows from the construction in the introduction.
\end{proof}

\section{Fractional almost $k$-covers of the $n$-cube}\label{sec_frac}
In this section, we determine $f^*(n, k)$ precisely and prove Theorem \ref{thm_frac}. We first establish an upper bound by an explicit construction of almost $k$-covers.

\begin{lemma}\label{lem_construction}
(i) For every $n, k$, 
$$f^*(n, k)\le \left(1+\frac{1}{2}+\cdots+\frac{1}{n}\right)k.$$
(ii) When $k$ is divisible by  $nx$, with $x= lcm(\binom{n-1}{0},\binom{n-1}{1},\cdots,\binom{n-1}{n-1})$, we have
$$f(n, k)\le \left(1+\frac{1}{2}+\cdots+\frac{1}{n}\right)k.$$
\end{lemma}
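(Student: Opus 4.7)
The plan is to exhibit an explicit weighted family of hyperplanes that meets the claimed bound, and then to argue that under the divisibility hypothesis in (ii) the construction is already integer-valued. For every non-empty $S\subseteq [n]$, let $H_S$ denote the hyperplane $\sum_{i\in S}x_i=1$, and assign it weight $w(H_S)=\frac{k}{|S|\binom{n}{|S|}}$; every hyperplane through $\vec 0$ receives weight $0$. The origin is automatically uncovered since no $H_S$ passes through it, and the total weight is
\[
\sum_{t=1}^n\binom{n}{t}\cdot\frac{k}{t\binom{n}{t}}=\sum_{t=1}^n\frac{k}{t}=\left(1+\tfrac12+\cdots+\tfrac1n\right)k,
\]
matching the target bound in (i).

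Next I will check that every non-zero $\vec v\in Q^n$ is covered with total weight at least $k$, in fact exactly $k$. Setting $T=\supp(\vec v)$ and $s=|T|$, note that $\vec v\in H_S$ precisely when $|S\cap T|=1$, and there are $s\binom{n-s}{t-1}$ such $S$ of each size $t$. Using $t\binom{n}{t}=n\binom{n-1}{t-1}$ and substituting $j=t-1$, the total covering weight becomes
\[
\sum_{t=1}^{n}\frac{k\,s\binom{n-s}{t-1}}{t\binom{n}{t}}
\;=\;\frac{ks}{n}\sum_{j=0}^{n-s}\frac{\binom{n-s}{j}}{\binom{n-1}{j}}.
\]
Invoking the classical binomial identity $\sum_{j=0}^{a}\binom{a}{j}/\binom{m}{j}=(m+1)/(m+1-a)$ with $a=n-s$ and $m=n-1$, the right-hand side collapses to $\frac{ks}{n}\cdot\frac{n}{s}=k$. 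So every non-zero vertex is covered with total weight exactly $k$, which proves part (i).

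For part (ii), the divisibility hypothesis $nx\mid k$, with $x=\mathrm{lcm}(\binom{n-1}{0},\ldots,\binom{n-1}{n-1})$, makes each weight $w(H_S)=\frac{k}{n\binom{n-1}{|S|-1}}$ into a positive integer, since $\binom{n-1}{|S|-1}$ divides $x$ by definition. Hence the weighted family above is already an integer almost $k$-cover, once each $H_S$ is replaced by $w(H_S)$ identical physical copies; the total number of hyperplanes is then $(1+\tfrac12+\cdots+\tfrac1n)k$, as required.

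The only non-obvious step in this plan is the binomial identity $\sum_{j=0}^{a}\binom{a}{j}/\binom{m}{j}=(m+1)/(m+1-a)$. I would verify it either by a short induction on $a$ with $m$ held fixed, or, more cleanly, via the beta-integral representation $1/\binom{m}{j}=(m+1)\int_0^1 t^j(1-t)^{m-j}\,dt$, under which the left-hand side becomes $(m+1)\int_0^1(1-t)^{m-a}\,dt=(m+1)/(m+1-a)$ by the binomial theorem. Beyond this single identity the proof is routine, and I do not anticipate any real obstacle.
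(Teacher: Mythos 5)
Your proposal is correct and uses the same construction as the paper: the weight $k/\bigl(t\binom{n}{t}\bigr)$ on each hyperplane $\sum_{i\in S}x_i=1$ with $|S|=t$, the same count $s\binom{n-s}{t-1}$ of hyperplanes of size $t$ covering a vertex of support size $s$, and the same divisibility observation for part (ii). The only difference is cosmetic: you collapse the covering sum via the identity $\sum_{j}\binom{a}{j}/\binom{m}{j}=(m+1)/(m+1-a)$ (correctly justified by the beta integral), whereas the paper reduces the same sum to the hockey-stick identity $\sum_{j}\binom{n-j}{t-1}=\binom{n}{t}$; both verifications are valid.
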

\begin{proof}
For (ii), it suffices to show that when $k=nx$, we can find an almost $k$-cover of $Q^n$, using  $k(1+1/2+\cdots+1/n)$ hyperplanes. We can then replicate this process to upper bound $f(n,k)$ where $k$ is any multiple of $nx$.

For $j=1, \cdots, n$, we will use every affine hyperplane of the form $x_{i_1}+x_{i_2}+\cdots+x_{i_j}=1$ a total of 	$\frac{nx}{j \binom{n}{j}}$ times. This number is actually an integer since it is equal to $\frac{x}{\binom{n-1}{j-1}}$, 
and by definition, $x$ is divisible by all $\binom{n-1}{j-1}$.

There are $\binom{n}{j}$ affine hyperplanes in this form, so the total number of being used is 
\begin{align*}
\sum_{j=1}^{n} \frac{nx}{j\binom{n}{j}} \cdot\binom{n}{j}
=\sum_{j=1}^{n} \frac{nx}{j} =\left(1+\frac{1}{2}+\cdots+\frac{1}{n} \right) k
\end{align*}

This is the number of hyperplanes claimed. If we could show that they form an almost $nx$-cover of $Q^n$, then we can scale the weights by a constant factor to obtain a fractional almost $k$-cover of $Q^n$ for every $k$ and (i) follows immediately.

Now we must show that these affine hyperplanes cover each point the appropriate number of times. It is apparent that $(0,\cdots,0)$ is never covered. Because of the symmetric nature of our construction, we just need to check how many times we have covered a vertex that has $t$ ones as coordinates. It gets covered by $t\binom{n-t}{j-1}$ distinct hyperplanes of the form $x_{i_1}+x_{i_2}+\cdots+x_{i_j}=1$, each of which appears $\frac{nx}{j\binom{n}{j}}$ times. Thus, the total number of times a point with $t$ ones is covered is given by:

\begin{align*}
\sum_{j=1}^{n} \frac{nx}{j\binom{n}{j}} \cdot t\binom{n-t}{j-1}&=nxt\sum_{j=1}^{n} \frac{\binom{n-t}{j-1}}{j \binom{n}{j}} =nxt\sum_{j=1}^n \frac{(n-t)!(n-j)!}{(n-t-j+1)!n!}\\
&=nxt \cdot \frac{(n-t)!}{n!}\cdot\sum_{j=1}^{n} \frac{(n-j)!}{(n-t-j+1)!}\\
&=\frac{nx}{(t-1)!\binom{n}{t}} \sum_{j=1}^{n} \frac{(n-j)!}{(n-t-j+1)!}
=\frac{nx}{\binom{n}{t}} \sum_{j=1}^{n} \binom{n-j}{t-1}\\
&=\frac{nx}{\binom{n}{t}}\binom{n}{t}
=nx=k
\end{align*} 
\end{proof}

To establish the lower bound in Theorem \ref{thm_frac}, first we assign weights to each vertex of $Q^n$ we wish to cover. A vertex with $t$ ones as coordinates is given weight $\frac{1}{t\binom{n}{t}}$. Then the sum of the weights of all the vertices is:
\begin{align*}
\sum_{t=1}^{n} \binom{n}{t} \cdot\frac{1}{t\binom{n}{t}}
=\sum_{t=1}^{n} \frac{1}{t}
\end{align*}
So if we cover each vertex $k$ times, the sum over all affine hyperplanes of the weights of the vertices they cover is $k(1+1/2+\cdots+1/n)$. Thus, if we can show that no hyperplane can cover a set of vertices whose weights sum to more than $1$, we will have proven the lower bound. Given an affine hyperplane $H$ not containing $\vec{0}$, denote by $\mathcal{A}_t$ the set of vertices with $t$ ones covered by $H$. We wish to prove Theorem \ref{lym_analogue}, i.e. 
\[
\sum_{t=1}^{n}\frac{|\mathcal{A}_t|}{t\binom{n}{t}}\leq{1}.
\] 

In general, vertices of $Q^n \setminus \{\vec{0}\}$ correspond to nonempty subsets of $[n]$. It is worth noting that if the equation of $H$ is $a_1x_1+\cdots +a_nx_n=1$, and all coefficients $a_i$ are strictly positive, the subsets corresponding to the vertices it covers will form an antichain. By the Lubell-Yamamoto-Meshalkin inequality,  
$$\sum_{t=1}^{n}\frac{|\mathcal{A}_t|}{t\binom{n}{t}} \le \sum_{t=1}^{n}\frac{|\mathcal{A}_t|}{\binom{n}{t}}\leq{1}.$$

However, some coefficients $a_i$ may be non-positive. In order to consider a more general hyperplane, we will associate each vertex it covers to some permutations of $[n]$. Consider the vertex $(c_1,c_2,\cdots,c_n) \in Q^n$ where the coordinates which are ones are $c_{i_1},\cdots,c_{i_t}$. We will associate this vertex to the permutations, $(d_1,d_2,\cdots,d_n)$ of $[n]$ which begin with $\{i_1,i_2,\cdots,i_t\}$ in some order and also have $\sum_{k=1}^{j} a_{d_k}<1$ for $1\leq{j}<t$.

\begin{lemma}\label{lem_disjoint}
	No permutation of $[n]$ is associated to more than one vertex on the same hyperplane.
\end{lemma}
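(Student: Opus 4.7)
The plan is to argue by contradiction: suppose a single permutation $\pi = (d_1, d_2, \ldots, d_n)$ of $[n]$ is associated to two distinct vertices $v$ and $v'$ lying on the same hyperplane $H\colon a_1 x_1 + \cdots + a_n x_n = 1$. Let $t$ and $t'$ denote the numbers of ones in $v$ and $v'$ respectively; by the definition of the association, the support of $v$ is exactly $\{d_1, \ldots, d_t\}$ and the support of $v'$ is exactly $\{d_1, \ldots, d_{t'}\}$, since the first $t$ (resp.\ $t'$) entries of $\pi$ must be a permutation of the positions of the ones.

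Assume without loss of generality that $t \le t'$. The key observation is then that the supports are nested: $\{d_1, \ldots, d_t\} \subseteq \{d_1, \ldots, d_{t'}\}$. If $t = t'$, the supports coincide and $v = v'$, contradicting distinctness. So we may assume $t < t'$.

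Now I would exploit the tension between ``$v$ lies on $H$'' and ``$\pi$ is associated to $v'$''. Since $v \in H$, summing $a_{d_k}$ over the support of $v$ gives
\[
\sum_{k=1}^{t} a_{d_k} \;=\; \sum_{i \in \supp(v)} a_i \;=\; 1.
\]
On the other hand, the condition that $\pi$ is associated to $v'$ demands the strict inequality $\sum_{k=1}^{j} a_{d_k} < 1$ for every $1 \le j < t'$. Applying this with $j = t$ (which is allowed because $t < t'$) yields $\sum_{k=1}^{t} a_{d_k} < 1$, contradicting the equality just derived.

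The argument is short and essentially combinatorial; there is no real obstacle, since everything follows from the definition of the association. The only mild subtlety is making sure the nesting of supports is correctly read off from the ``first $t$ entries'' condition, and that the strict inequality in the definition — rather than a weak one — is what rules out the case $t < t'$. This is precisely why the definition insists on $\sum_{k=1}^{j} a_{d_k} < 1$ for $j < t$, stopping just short of the index where equality is forced.
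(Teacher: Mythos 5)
Your proof is correct and follows essentially the same route as the paper's: the equal-size case is handled by noting the first block of the permutation determines the support, and the unequal-size case by playing the equality $\sum_{k=1}^{t} a_{d_k}=1$ (from the smaller vertex lying on $H$) against the strict inequality $\sum_{k=1}^{t} a_{d_k}<1$ required for association to the larger vertex. No gaps.
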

\begin{proof}
	Suppose for the sake of contradiction that a permutation is associated to two vertices, $v$ and $w$, of the same hyperplanes. They may have either the same or a different number of ones as coordinates.
	
	Suppose that $v$ and $w$ both have $a$ ones as coordinates. The permutations associated to $v$ have the $a$ indices where $v$ has a $1$ as their first $a$ entries and the permutations associated to $v$ will have the $a$ indices where $w$ has a $1$ as their first $a$ entries. However, $v$ and $w$ do not have their ones in the exact same places so the set of the first $a$ entries is not the same for any pair of a permutation associated to $v$ and a permutation associated to $w$.
	
	We are left to consider the case where $v$ and $w$ do not have the same number of ones as coordinates. Without loss of generality, $v$ has $a$ ones as coordinates and $w$ has $b$ ones as coordinates where $a>b$. Suppose the permutation associated to both of them begins with $(d_1,d_2,\cdots,d_b)$. By the restrictions on permutations associated to $v$, we have that $\sum_{j=1}^{b} a_{d_j} <1$. However, the conditions on permutations associated to $w$ tell us that $(d_1,d_2,\cdots,d_b)$ are precisely the indices where $w$ has a $1$ coordinate. This implies $\sum_{j=1}^{b} a_{d_j}=1$, giving a contradiction.
	
\end{proof}

\begin{lemma}\label{lem_counting}
	The total number of permutations associated to a vertex with $t$ ones as coordinates is at least $(t-1)!(n-t)!$
\end{lemma}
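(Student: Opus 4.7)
The plan is to decouple the first $t$ entries of an associated permutation from the last $n-t$ entries. By the definition introduced just before the lemma, a permutation $(d_1, \ldots, d_n)$ associated with a vertex $v$ whose $1$-coordinates are indexed by $\{i_1, \ldots, i_t\}$ consists of an ordering $(d_1, \ldots, d_t)$ of $\{i_1, \ldots, i_t\}$ satisfying $\sum_{k=1}^{j} a_{d_k} < 1$ for each $1 \le j < t$, together with an arbitrary ordering of the remaining $n-t$ indices placed in positions $t+1, \ldots, n$. The second choice contributes an unconditional factor of $(n-t)!$, so it suffices to exhibit at least $(t-1)!$ valid orderings of the first $t$ positions.

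Setting $b_k := a_{i_k}$, so that $b_1 + \cdots + b_t = 1$ since $v \in H$, the task reduces to counting permutations $\pi$ of $[t]$ for which every strict prefix sum $\sum_{k=1}^{j} b_{\pi(k)}$ with $1 \le j < t$ is less than $1$. The natural attack is a cycle-lemma-type argument. I would partition the symmetric group $\mathfrak{S}_t$ into its $(t-1)!$ orbits, each of size $t$, under the cyclic shift $(d_1, d_2, \ldots, d_t) \mapsto (d_2, \ldots, d_t, d_1)$, and prove that every orbit contains at least one valid ordering.

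To execute the orbit step, I would fix a representative $\pi$, let $P_j := \sum_{k \le j} b_{\pi(k)}$ with $P_0 = 0$ and $P_t = 1$, and take $j^* \in \{0, 1, \ldots, t-1\}$ to be the \emph{smallest} index at which $P_{j^*}$ is maximal. I would then verify that the rotation beginning at position $j^*+1$ is valid: its length-$\ell$ prefix sums split into a non-wrapping range $1 \le \ell \le t - j^*$, where the sum equals $P_{j^*+\ell} - P_{j^*} \le 0 < 1$ by maximality, and a wrapping range, where the sum equals $1 - P_{j^*} + P_m$ with $1 \le m < j^*$ and is $<1$ because $P_m < P_{j^*}$ by the minimality of $j^*$. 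The boundary case $\ell = t - j^*$ gives $1 - P_{j^*}$, which is $<1$ provided $P_{j^*} > 0$; this holds whenever $j^* \ge 1$, again by minimality (so $P_{j^*} > P_0 = 0$). The edge case $j^* = 0$ is immediate: no rotation is applied, and $P_j \le 0 < 1$ for every $j < t$.

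The main obstacle I anticipate is keeping strict versus non-strict inequalities straight when several partial sums tie at the maximum. Selecting $j^*$ as the smallest (rather than the largest) maximizer is essential on two fronts: it forces $P_m < P_{j^*}$ for every $m < j^*$, ruling out a wrap-around prefix sum that equals $1$ exactly, and it simultaneously ensures $P_{j^*} > 0$ whenever $j^* \ge 1$, which is what keeps the boundary sum strictly below $1$. Once this case analysis is carried out cleanly, the cycle lemma supplies at least one valid ordering in each cyclic orbit, giving at least $(t-1)!$ valid orderings of $\{i_1, \ldots, i_t\}$ in total; multiplying by the $(n-t)!$ free arrangements of the remaining indices yields the claimed bound $(t-1)!(n-t)!$.
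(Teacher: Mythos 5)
Your proof is correct and takes essentially the same route as the paper: factor out the $(n-t)!$ arrangements of the remaining indices, group the orderings of $\{i_1,\dots,i_t\}$ into $(t-1)!$ cyclic classes, and use a cycle-lemma argument to exhibit a valid starting point in each class. The only (cosmetic) difference is that the paper first subtracts $1/t$ from each value so that it suffices to make the partial sums $\le 0$ and then starts just after a maximal block of consecutive terms, whereas you work with the raw prefix sums and take the smallest argmax to keep the inequalities strict.
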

\begin{proof}
	There are $(n-t)!$ ways to arrange the indices other than $\{i_1,i_2,\cdots,i_t\}$, so it suffices to show that there exist at least $(t-1)!$ ways to order $\{i_1,i_2,\cdots,i_t\}$ as $(d_1,d_2,\cdots,d_t)$ such that we have $\sum_{k=1}^{j} a_{d_k}<1$ for $1\leq{j}<t$. We notice that $(t-1)!$ is the number of ways to order $\{i_1,i_2,\cdots,i_t\}$ around a circle (up to rotations, but not reflections). Thus it suffices to show that for each circular ordering of $\{i_1,i_2,\cdots,i_t\}$, we can choose a starting place from which we may continue clockwise and label the elements as $(d_1,d_2,\cdots,d_t)$ in such a way that $\sum_{k=1}^{j} a_{d_k}<1$ for all $1\leq{j}<t$.
	
	Equivalently, the values of $a_{i_k}$, which happen to sum to $1$, have been listed around a circle for $1\leq{k}\leq{t}$. We wish to find some starting point from which all the partial sums of up to $t-1$ terms from that point are less than $1$. We can subtract $1/t$ from each to give the equivalent problem of $t$ numbers, which sum to $0$, written around a circle and needing to find a starting point from which all the partial sums of $1\leq{j}\leq{t-1}$ terms are less than $1-\frac{j}{t}$. It suffices to find a starting point for which the aforementioned partial sums are at most $0$.
	
	Consider all possible sums of any number of consecutive terms along the circle and choose the largest. We will label the terms in this sum as $e_1,e_2,\cdots,e_m$ and continue to order clockwise around the circle $e_{m+1},e_{m+2},\cdots,e_{t}$. Choose the starting point to be $e_{m+1}$. If any of the partial sums $e_{m+1}+e_{m+2}+\cdots+e_{m+j}$ exceeds $0$, for $m+j\leq{t}$, we could have simply chosen $e_1,e_2,\cdots,e_{m+j}$ to get a larger sum than $e_1+e_2+\cdots+e_m$. Similarly, if $e_{m+1}+e_{m+2}+\cdots+e_t+e_1+e_2+\cdots+e_j>0$ for some $1\leq{j}<m$, then we can note that $(e_1+e_2+\cdots+e_t)+(e_1+e_2+\cdots+e_j)$ exceeds $e_1+e_2+\cdots+e_m$, and since $e_1+e_2+\cdots+e_t=0$, we have that $e_1+e_2+\cdots+e_j>e_1+e_2+\cdots+e_m$, a contradiction. Thus, if we start at $e_{m+1}$ and move clockwise around the circle, the first $t-1$ partial sums will be at most $0$, as desired.
	
\end{proof}

Combining the previous results, we prove Theorem \ref{lym_analogue}, which can be viewed as an analogue of the LYM inequality for partial sums.

\begin{proof}[Proof of Theorem \ref{lym_analogue}]
By definition, sets in $\mathcal{A}$ correspond to vertices of $Q^n$ covered by the hyperplane $H$ with equation $a_1x_1+ \cdots +a_nx_n=1$. From Lemma \ref{lem_disjoint} and \ref{lem_counting}, these vertices define disjoint collections of permutations of length $n$. Moreover if $S \in \mathcal{A}$ has size $t$ then there are at least $(t-1)!(n-t)!$ permutations associated to it. Since in total there are at most $n!$ permutations, we get
$$\sum_{S \in \mathcal{A}} (|S|-1)!(n-|S|)!\leq{n!},$$
which implies 
$$\sum_{S \in \mathcal{A}}  \frac{1}{|S|\binom{n}{|S|}} \le 1$$
as desired.
\end{proof}

Now we are ready to prove our main theorem in this section.
\begin{proof}[Proof of Theorem \ref{thm_frac}]
As mentioned before, we assign weight $\frac{1}{t \binom{n}{t}}$ to a vertex of $Q^n \setminus \{\vec{0}\}$ with $t$ ones as coordinates. By Lemma \ref{lym_analogue}, every affine hyperplane covers a set of vertices whose weights sum to at most $1$. Therefore in an optimal fractional almost $k$-cover $\{w(H)\}$, 
$$f^*(n, k)=\sum_H w(H) \ge k \cdot \sum_{t=1}^n \frac{\binom{n}{t}}{t \binom{n}{t}} = \left(\sum_{i=1}^n \frac{1}{i}\right) k.$$ 
With the upper bound proved in Lemma \ref{lem_construction}, we have 
$$f^*(n, k)=\left(\sum_{i=1}^n \frac{1}{i}\right) k.$$

For integral almost $k$-covers, note that $f(n, k) \ge f^*(n, k)$. Using Lemma \ref{lem_construction} again,
$$f(n, k)=f^*(n, k)=\left(\sum_{i=1}^n \frac{1}{i}\right) k,$$
whenever $nx$ divides $k$. For fixed $n$ and $k \rightarrow \infty$, note that $f(n, k)$ is monotone in $k$, which immediately implies
$$f(n, k)= \left(1+\frac{1}{2}+\cdots + \frac{1}{n}+o(1)\right) k.$$
\end{proof}
For small values of $n$, we can actually determine the value of $f(n, k)$ for every $k$. It seems that for large $k$, $f(n, k)$ is not far from its lower bound $\lceil f^*(n, k)\rceil$. Trivially $f(1, k)=k$.
\begin{theorem}
The following statements are true:\\
(i) $f(2, k)=\lceil{\frac{3k}{2}} \rceil$ for $k \ge 1$.\\
(ii) $f(3, k)=\lceil{\frac{11k}{6}} \rceil$ for $k \ge 2$ and $f(3, 1)=3$.
\end{theorem}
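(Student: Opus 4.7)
The lower bound is immediate from Theorem \ref{thm_frac} combined with integrality: since any integral almost $k$-cover is also a fractional almost $k$-cover, $f(n, k) \ge \lceil f^*(n, k) \rceil = \lceil (1 + \tfrac12 + \cdots + \tfrac1n)\, k \rceil$, which equals $\lceil 3k/2 \rceil$ for $n = 2$ and $\lceil 11k/6 \rceil$ for $n = 3$. The isolated value $f(3, 1) = 3$ is just the Alon--F\"uredi bound, and will be treated separately.

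For the upper bound the key tool is the subadditivity $f(n, k_1 + k_2) \le f(n, k_1) + f(n, k_2)$, since the union of an almost $k_1$-cover and an almost $k_2$-cover is an almost $(k_1 + k_2)$-cover. For $n = 2$, the two base cases $f(2, 1) \le 2$ (from $\{x_1 = 1, x_2 = 1\}$) and $f(2, 2) \le 3$ (from $\{x_1 = 1, x_2 = 1, x_1 + x_2 = 1\}$), together with the step $f(2, k) \le f(2, k-2) + 3$, give $f(2, k) \le \lceil 3k/2 \rceil$ by induction on $k$. For $n = 3$, the plan is to verify $f(3, r) \le \lceil 11r/6 \rceil$ for the six residues $r = 2, 3, 4, 5, 6, 7$, and then iterate using the $11$-plane almost $6$-cover of Lemma \ref{lem_construction}: $f(3, k) \le f(3, k-6) + 11$ for all $k \ge 8$, which telescopes to $\lceil 11k/6 \rceil$ since $\lceil 11(k-6)/6 \rceil + 11 = \lceil 11k/6 \rceil$.

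Four of the bases are essentially free: $f(3, 2) = 4$ from the general fact $f(n, 2) = n+1$, $f(3, 3) = 6$ from Theorem \ref{thm_3}, $f(3, 4) = 8$ from Theorem \ref{thm_2}, and $f(3, 6) \le 11$ from Lemma \ref{lem_construction}. The base $r = 5$ is handled by subadditivity: $f(3, 5) \le f(3, 2) + f(3, 3) \le 4 + 6 = 10$. The truly delicate case is $r = 7$, because every naive splitting $7 = 1+6 = 2+5 = 3+4$ yields $14$ planes (the bottleneck being $f(3, 1) = 3$), so a $13$-plane almost $7$-cover must be built directly.

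For $r = 7$ I would produce a construction that breaks the full $S_3$ symmetry. Start with two copies each of the six $4$-vertex ``saturating'' hyperplanes $x_i = 1$ and $x_i + x_j = 1$; these $12$ planes cover each $e_i$ and $(1,1,1)$ exactly $6$ times and each weight-$2$ vertex exactly $8$ times. Then delete one copy of $x_1 = 1$ and add the two alternating hyperplanes $x_1 + x_2 - x_3 = 1$ and $x_1 - x_2 + x_3 = 1$, each of which passes through $(1,1,1)$ and through two of the standard basis vectors. A direct case check, verifying each of the seven nonzero vertices individually, shows that every vertex is still covered at least $7$ times by the resulting $13$ planes. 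The main obstacle is precisely finding this construction: the LP lower bound $77/6 \approx 12.83$ leaves essentially no slack, so one cannot restrict attention to hyperplanes of the form $\sum_{i \in S} x_i = 1$ and must exploit planes with negative coefficients.
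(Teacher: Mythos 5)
Your proposal is correct and follows essentially the same route as the paper: the lower bound via $f(n,k)\ge\lceil f^*(n,k)\rceil$ from Theorem \ref{thm_frac}, and the upper bound via the periodic recursions $f(2,k)\le f(2,k-2)+3$ and $f(3,k)\le f(3,k-6)+11$ with the same set of base cases. Your only departures are minor and valid: you get $f(3,5)\le 10$ by subadditivity rather than an explicit list, and your $13$-plane almost $7$-cover of $Q^3$ (one copy of $x_1=1$, two copies each of $x_2=1$, $x_3=1$, and the three $x_i+x_j=1$, plus $x_1+x_2-x_3=1$ and $x_1-x_2+x_3=1$) differs from the paper's but checks out --- every nonzero vertex is covered exactly $7$ times except $(0,1,1)$, which is covered $8$ times.
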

\begin{proof}
(i) From previous discussions, there exists an almost $2$-cover of $Q^2$ using $3$ affine hyperplanes. Therefore $f(2, k+2) \le f(2, k)+3$, and it suffices to check $f(2, 1)=2$ and $f(2, 2)=3$ which are both obvious.\\
(ii) There exists an almost $6$-cover of $Q^3$ using $11$ affine hyperplanes. Therefore $f(3, k+6) \le f(3, k) + 11$. It suffices to check $f(3, k)\le \lceil{\frac{11k}{6}} \rceil$ for $k=2, \cdots, 5$ and $k=7$. From $f(n, 2)=n+1$, we have $f(3, 2)=4$. $f(3, 3)\le 6$ follows from Theorem \ref{thm_3}. $f(3, 4) \le 8$ since $f(3, 4) \le  2f(3, 2)$. $f(3, 5) \le 10$ by taking each of $x_i=1$ twice, $x_1+x_2+x_3=1$ three times, and $x_1+x_2+x_3=2$ once.  $f(3, 7) \le 13$ follows from taking each of  $x_1=1$, $x_2=1$, $x_3=1$, $x_1+x_2=1$, $x_1+x_3=1$ twice, and $x_2+x_3=1$, $x_2+x_3-x_1=1$, $x_1+x_2+x_3=1$ once.
\end{proof}
With the assistance of a computer program, we also checked that $f(4, k)=\lceil \frac{25k}{12}\rceil$ for $k \ge 2$. $f(5, k)=\lceil \frac{137}{60}k \rceil$ for $k \ge 15$ except when $k \equiv 7 \pmod{60}$ where $f(5, k)=\lceil \frac{137}{60}k \rceil+1$. The following question is natural.

\begin{question}
Does there exist an absolute constant $C>0$ which does not depend on $n$, such that for a fixed integer $n$, there exists $M_n$, so that whenever $k \ge M_n$, 
$$f(n, k) \le \left(1+\frac{1}{2}+\cdots + \frac{1}{n}\right) k + C?$$
\end{question}
If so, it would show that $f(n, k)$ and $f^*(n, k)$ differ by at most a constant when $k$ is large.
\section{Concluding Remarks}
In this paper, we determine the minimum size of a fractional almost $k$-cover of $Q^n$, and find the minimum size of an integral almost $k$-cover of $Q^n$, for $k \le 3$. Note that $f(n, 1)=n$ for $n \ge 1$, $f(n, 2)=n+1$ for $n \ge 1$, and $f(n, 3)=n+3$ for $n \ge 2$. All of them attain the upper bound $f(n, k) \le n + \binom{k}{2}$ whenever $n$ is not too small. For larger $k$ the following conjecture seems plausible.
\begin{conjecture}\label{conj_main}
For an arbitrary fixed integer $k \ge 1$ and sufficiently large $n$, 
$$f(n, k) = n+ \binom{k}{2}.$$
In other words, for large $n$, an almost $k$-cover of $Q^n$ contains at least $n+ \binom{k}{2}$ affine hyperplanes.
\end{conjecture}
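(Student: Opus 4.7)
The plan is to extend the Punctured Combinatorial Nullstellensatz framework developed for Theorems \ref{thm_1} and \ref{thm_2}. Suppose for contradiction that $m = n + \binom{k}{2} - 1$ affine hyperplanes $H_1, \ldots, H_m$, defined by $\langle \vec{b}_i, \vec{x}\rangle = 1$, form an almost $k$-cover of $Q^n$. Setting $f = P_1 \cdots P_m$ with $P_i = \langle \vec{b}_i, \vec{x}\rangle - 1$, and applying Lemma \ref{lem_pcn} with $D_i = \{0\}$, $S_i = \{0,1\}$, $g_i = x_i(x_i - 1)$, $\ell_i = x_i$, we write
$$f = \sum_{\tau \in T(n, k)} g_{\tau(1)} \cdots g_{\tau(k)} h_\tau + u \prod_{i=1}^n (x_i - 1),$$
with $\deg u \le \binom{k}{2} - 1$. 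The same Leibniz-rule bookkeeping used in the $k=3,4$ cases (together with the fact that each $g_i$ vanishes on all of $Q^n$) gives two key conclusions on $u$: (a) $u(\vec{0}) = \pm 1$; and (b) for every nonempty $T \subseteq [n]$ with $|T| = t < k$, $u$ has a zero of multiplicity at least $k - t$ at $e_T$. The conjecture thus reduces to: for $n$ sufficiently large in terms of $k$, no polynomial $u$ of degree at most $\binom{k}{2} - 1$ can satisfy (a) and (b) simultaneously.

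Next I would symmetrize. Both (a) and (b) are $S_n$-invariant, so replacing $u$ by its average over $S_n$ preserves them and keeps $u(\vec{0}) \ne 0$; we may therefore assume $u = U(e_1, \ldots, e_D)$ with $D = \binom{k}{2} - 1$ and $U$ of weighted degree $\le D$. The ambient space has finite dimension $\sum_{j=0}^D p(j)$ independent of $n$. For symmetric $u$, a partial derivative $\partial^\alpha u(e_T)$ depends only on the pair of multisets $(\{\alpha_i\}_{i \in T}, \{\alpha_i\}_{i \notin T})$; summing over $t = 1, \ldots, k-1$ and over these ``types'' of $\alpha$ with $|\alpha| < k - t$, the Hermite-type vanishing in (b) collapses to a finite linear system on the coefficients of $U$. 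The problem becomes purely algebraic: show this system, together with the constant-term condition $U(0, \ldots, 0) \ne 0$, has no solution.

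For $k = 3$, the resulting system has $4$ equations on the $4$-dimensional space spanned by $\{1, e_1, e_1^2, e_2\}$, and a direct calculation forces $U \equiv 0$, giving a streamlined reproof of Theorem \ref{thm_1}. For general $k$ my intended route is twofold: combine this symmetric system with the original polynomial identity (e.g.\ by restricting $f$ to affine subcubes, or by comparing top-degree monomial coefficients) to extract additional constraints on $u$; and induct on $k$, exploiting the fact that restricting an almost $k$-cover to $\{x_n = 0\}$ and to $\{x_n = 1\}$ yields almost $k$- and almost $(k-1)$-covers of $Q^{n-1}$ respectively, which should let inductive lower bounds feed into the algebraic obstruction.

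The main obstacle is that naive dimension counting is insufficient for $k \ge 4$: already for $k = 4$ the symmetric system has roughly $11$ equations on a $19$-dimensional space, so the constraints in (a) and (b) alone do not pin down $u$ to zero. Any successful proof must therefore identify hidden dependencies among the symmetric Hermite conditions, or extract further constraints on $u$ from the fact that $f$ is specifically a product of $m$ linear forms (not an arbitrary polynomial vanishing to high order on $Q^n \setminus \{\vec 0\}$). The author's explicit coefficient elimination for $k = 4$ (Theorem \ref{thm_2}) fell exactly one hyperplane short of the conjectured bound, suggesting that a fundamentally new algebraic input -- perhaps a sharpened version of Lemma \ref{lem_pcn} exploiting that each $P_i$ has degree one, or a duality with the tight upper-bound construction $x_i = 1$ together with weighted copies of $\sum x_i = t$ -- will be needed to close the gap for all $k$.
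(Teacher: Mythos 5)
This statement is Conjecture \ref{conj_main}, which the paper does not prove; the only result in its direction proved here is the lower bound $f(n,k)\ge n+k+1$ of Theorem \ref{thm_3} (plus Alon's Ramsey argument for the weaker Conjecture \ref{conj_weaker}, which concerns only covers that already contain all the coordinate hyperplanes $x_i=1$). So your submission should be judged as a proof of an open conjecture, and it is not one: it is a correct setup followed by an explicit admission that the core step is missing. The setup itself is sound -- with $m=n+\binom{k}{2}-1$ hyperplanes the Ball--Serra decomposition does give $\deg u\le\binom{k}{2}-1$, $u(\vec 0)=(-1)^{\binom{k}{2}-1}\ne 0$, and a zero of $u$ of multiplicity at least $k-t$ at each vertex with $t$ ones; the symmetrization of $u$ over $S_n$ is also legitimate since the constraint set is $S_n$-stable and linear.

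The genuine gap is that you reduce the conjecture to the assertion ``no polynomial $u$ of degree $\le\binom{k}{2}-1$ satisfies (a) and (b),'' and then observe yourself that this assertion fails already for $k=4$ (your symmetric system is underdetermined: roughly $11$ conditions on a $19$-dimensional space). In other words, the finite linear-algebra problem you reduce to has nonzero solutions, so conditions (a) and (b) alone cannot yield a contradiction, and the reduction loses exactly the information that would be needed. This is consistent with the paper's own experience: the explicit elimination in Theorem \ref{thm_2} exhausts all the Hermite-type conditions on $u$ and still only reaches $n+4<n+\binom{4}{2}-1$. The proposed remedies -- restricting $f$ to subcubes, comparing top-degree coefficients, exploiting that $f$ is a product of linear forms, inducting on $k$ via the faces $x_n=0$ and $x_n=1$ -- are named but not carried out, and none of them is shown to produce even one additional independent constraint on $u$. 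Until one of these ideas is made to work (or a genuinely different method is found), the argument establishes nothing beyond what Theorems \ref{thm_1} and \ref{thm_2} already give.
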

In particular, for $k=4$, although $f(n, k) \le n+5$ for $n \le 5$, we suspect that for $n \ge 6$, $n+6$ affine hyperplanes are necessary for an almost $4$-cover of $Q^n$. If we restrict our attention to almost $k$-covers of $Q^n$ which use each of the affine hyperplanes $x_i=1$ for $i=1,\cdots,n$, we see that Conjecture \ref{conj_main}, if true, will imply the following weaker conjecture:
\begin{conjecture}\label{conj_weaker}
For fixed $k \ge 1$ and sufficiently large $n$, suppose $H_1, \cdots, H_m$ are affine hyperplanes in $\mathbb{R}^n$ not containing $\vec{0}$, and they cover all the vectors with $t$ ones as coordinates at least $k-t$ times, for $t=1, \cdots, k-1$. Then $m \ge \binom{k}{2}$.
\end{conjecture}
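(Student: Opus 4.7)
The plan is to recast the conjecture, via the Punctured Combinatorial Nullstellensatz, as a clean polynomial-degree lower-bound problem, and then attack that via symmetrization. Suppose for contradiction that $H_1, \ldots, H_m$ satisfy the hypotheses with $m \le \binom{k}{2} - 1$. Let $P_i$ be the linear form defining $H_i$ (normalized so $P_i(\vec{0}) = -1$), and adjoin the $n$ trivial hyperplanes $x_i = 1$ to form a full almost $k$-cover of $Q^n$. Setting $F = P_1 \cdots P_m \cdot \prod_{i=1}^n (x_i - 1)$, we have $\deg F = m + n$, $F(\vec{0}) \ne 0$, and $F$ vanishes to order at least $k$ at every non-zero vertex of $Q^n$, because $\prod_{i=1}^n (x_i - 1)$ contributes order exactly $t$ at each weight-$t$ vertex and $P_1 \cdots P_m$ contributes at least $k-t$ more.

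Applying Lemma \ref{lem_pcn} with $g_i = x_i(x_i - 1)$, $\ell_i = x_i$, $D_i = \{0\}$, $S_i = \{0, 1\}$ and multiplicity $t = k$ yields a decomposition
$$F = \sum_{\tau \in T(n, k)} g_{\tau(1)} \cdots g_{\tau(k)} h_\tau + u \cdot \prod_{i=1}^n (x_i - 1),$$
with a non-zero polynomial $u$ of degree at most $m$. Since each $g_i$ vanishes on the cube, the $k$-fold product $g_{\tau(1)} \cdots g_{\tau(k)}$ together with all of its partial derivatives of order less than $k$ vanish at every cube vertex, so $h := u \cdot \prod_{i=1}^n (x_i - 1)$ inherits from $F$ the vanishing to order at least $k$ at every non-zero vertex. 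Because $\prod_i (x_i - 1)$ has multiplicity exactly $t$ at a weight-$t$ vertex, one concludes that $u$ vanishes to order at least $k - t$ at every weight-$t$ vertex for $1 \le t \le k - 1$, while $u(\vec{0}) \ne 0$. The conjecture is thereby reduced to the following polynomial degree bound: any nonzero polynomial $u \in \mathbb{R}[x_1, \ldots, x_n]$ with $u(\vec{0}) \ne 0$ that vanishes to order at least $k - t$ at each weight-$t$ vertex of $Q^n$ (for $1 \le t \le k - 1$) must have $\deg u \ge \binom{k}{2}$. The bound would be tight, as witnessed by $u = \prod_{j=1}^{k-1}(e_1 - j)^{k-j}$ with $e_1 = x_1 + \cdots + x_n$.

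To attack this reduced statement, I would first symmetrize $u$ by averaging over the $S_n$-action permuting coordinates; this preserves the degree, the value at $\vec{0}$, and all multiplicity conditions (since the set of weight-$t$ vertices is $S_n$-invariant), so without loss of generality $u$ is symmetric, and may be written as a polynomial $P(e_1, e_2, \ldots, e_d)$ in elementary symmetric functions with weighted degree at most $d = \deg u$. In the univariate subcase $u = U(e_1)$, any mixed partial derivative of $u$ is of the form $U^{(a)}(e_1)$; the vanishing conditions force $U$ to be divisible by $(x - t)^{k-t}$ for each $t = 1, \ldots, k-1$, immediately giving $\deg U \ge \binom{k}{2}$ and closing the argument.

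The main obstacle is handling the higher symmetric functions $e_2, e_3, \ldots$. When $u$ genuinely depends on them, the order-$(k-t)$ vanishing at weight-$t$ vertices becomes a system of constraints on $P$ at the points $(t, \binom{t}{2}, \ldots, \binom{t}{d})$, and extracting the tight degree bound $\binom{k}{2}$ from this system appears to require either an inductive scheme peeling off one symmetric generator at a time, or a Hermite-interpolation-style argument tailored to the \emph{staircase} pattern of multiplicities $k-1, k-2, \ldots, 1$ at weights $1, 2, \ldots, k-1$. Reducing the multivariate symmetric setting to the univariate setting is the principal hurdle, and success likely hinges on exploiting the large-$n$ hypothesis in an essential way, since both the ring of symmetric polynomials and the collection of vertex conditions grow with $n$ and must be delicately balanced.
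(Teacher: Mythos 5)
There is a genuine gap here: the proposal is a reduction plus an admitted unproven core, not a proof. The reduction itself is carried out correctly --- with $F = P_1\cdots P_m\prod_i(x_i-1)$, Lemma \ref{lem_pcn} does produce a nonzero $u$ with $\deg u \le m$, $u(\vec{0})\neq 0$, and a zero of multiplicity at least $k-t$ at every weight-$t$ vertex. But everything then hinges on the degree bound $\deg u \ge \binom{k}{2}$, and you only verify it when the symmetrized $u$ is a univariate polynomial in $e_1$. Symmetrization does not put you in that case: a symmetric polynomial of degree $\ge 2$ generically involves $e_2, e_3, \dots$, and the order-$(k-t)$ vanishing conditions constrain \emph{all} partial derivatives of $u$, which do not descend to conditions on a single-variable polynomial. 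You name this as ``the principal hurdle'' and offer no argument for it, so the central step is missing.

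Moreover, the reduction overshoots the target. The polynomial statement you are left with would, by the identical computation applied to an arbitrary almost $k$-cover of size $n+\binom{k}{2}-1$ (no adjoined coordinate hyperplanes needed), prove the stronger Conjecture \ref{conj_main}, which is the paper's main open problem; so this route is at least as hard as that conjecture. In fact it is a dead end: Sauermann and Wigderson later constructed polynomials of degree $n+2k-3$ vanishing to order $k$ at every nonzero vertex of $Q^n$ but not at $\vec{0}$, and feeding such a polynomial through your own decomposition produces a $u$ of degree at most $2k-3$ satisfying all of your conditions, so the reduced degree bound is false for every $k\ge 4$. The paper's actual proof (due to Alon) avoids polynomials entirely and exploits large $n$ combinatorially: by Ramsey's theorem one finds a nested sequence of large index sets $N_1 \supset N_2 \supset \cdots$ such that, for each $t$, the hyperplanes covering all $(k-t)$-subsets of $N_t$ are forced to have all relevant coefficients equal to $1/(k-t)$ and are therefore useless for covering smaller subsets; discarding them stage by stage accounts for $1+2+\cdots+(k-1)=\binom{k}{2}$ distinct hyperplanes. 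If you want a complete argument, that combinatorial route is the one to pursue.
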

If this conjecture is true, then the $\binom{k}{2}$ bound is the best possible, since one can take $k-t$ copies of $x_1 + \cdots +x_n=t$ for $t=1,\cdots,k-1$. We note that using our weights from earlier, and the fact that a hyperplane cannot cover vertices whose weights sum to more than $1$, we require:

\begin{align*}
m\geq{\sum_{t=1}^{k-1} (k-t)\binom{n}{t}\frac{1}{t\binom{n}{t}}}
=1-k+\sum_{t=1}^{k-1} \frac{k}{t}=(1-o(1)) k \ln k.
\end{align*}

\noindent {\bf Remark added. }Alon communicated to us that Conjecture \ref{conj_weaker} is true. With his permission, we include his proof using Ramsey-type arguments below. Let $n$ be huge, and let $S$ be a collection of $m$ affine hyperplanes $H_1, \cdots, H_m$ satisfying the assumptions in Conjecture \ref{conj_weaker} and $N=[n]$. Color each subset of size $k-1$ by the index of the first  hyperplane that covers it ($m$ colors), by Ramsey there is a large subset $N_1$ of $N$ so that all $(k-1)$-subsets of it are covered by the same hyperplane. Without loss of generality, the equation of this hyperplane is $\sum_i w_i x_i=1$ and it follows that for all $j \in N_1$, all $w_j$ are equal and hence all are equal $1/(k-1)$. Therefore this hyperplane cannot cover any $k-t$ subset of $N_1$ for $t \geq 2$. Now throw away this hyperplane and repeat the argument for subsets of size $k-2$ of $N_1$. Coloring each such subset by the pair of smallest two indices of the hyperplanes that cover it (${m \choose 2}$ colors), we get a monochromatic subset $N_2$ of $N_1$ and observe that here too each of these two hyperplanes whose equation is $\sum_i w_i x_i=1$ has $w_j=1/(k-2)$ for all $j \in N_2$. So these cannot be useful for covering smaller subsets of $N_2$, throw them away and repeat this process. After dealing with all subsets including those of size $1$ we get the assertion of the conjecture. \qed\\

Alon and F\"uredi \cite{alon-furedi} proved the following result using induction on $n-m$: for $n \ge m \ge 1$, then $m$ hyperplanes that do not cover all vertices of $Q^n$ miss at least $2^{n-m}$ vertices. Let $g(n, m, k)$ be the minimum number of 
vertices covered less than $k$ times by $m$ affine hyperplanes not passing through $\vec{0}$. The Alon-F\"uredi theorem shows $g(n, m, 1)=2^{n-m}$ for $m=1,\cdots,n$. For $k=2$, it is straightforward to show that for $m=1,\cdots,n+1$, we have:
\begin{equation}
g(n, m, 2)= 2^{n-m+1}
\end{equation}
This is because $m-1$ hyperplanes leave at least $2^{n-m+1}$ vertices uncovered, and with one more hyperplane, these vertices cannot be covered twice. Similarly, for $k \ge 3$, we can obtain a trivial lower bound $g(n, m, k) \ge 2^{n-m+k-1}$. On the other hand, suppose $f(d, k)=t$ for $d \le n$, then take the affine hyperplanes $H_1, \cdots, H_t$ in an almost $k$-cover of $Q^d$. Observe that $H_i \times \mathbb{R}^{n-d}$ is an affine hyperplane in $Q^n$ not containing $\vec{0}$. It is easy to see that $\{H_i \times \mathbb{R}^{n-d}\}$ covers all the vertices of $Q^n$ but those of the form $\vec{0} \times \{0, 1\}^{n-d}$ at least $k$ times. Therefore $g(n, t, k) \le 2^{n-d}$. Theorem \ref{thm_3} shows $f(d, 3)=d+3$ for $d \ge 2$, therefore $g(n, d+3, 3) \le 2^{n-d}$ or $g(n, m, 3) \le 2^{n-m+3}$ for $m \ge 5$. We believe that this upper bound is tight. Note that the trivial lower bound is $g(n, m, 3) \ge 2^{n-m+2}$.
\begin{conjecture}
\begin{align*}
g(n, m, 3)= \begin{cases}
2^{n}, & m=1, 2;\\
2^{n-1}, &m=3;\\
2^{n-m+3}, &m=4, \cdots, n+3.\\
\end{cases}
\end{align*}
\end{conjecture}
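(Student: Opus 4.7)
My proposal splits into three regimes: the trivial cases $m\in\{1,2,3\}$, the upper bound for $4\le m\le n+3$ which the paper essentially hands us, and the lower bound for $4\le m\le n+3$ which is the real content.

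For $m\in\{1,2\}$ no vertex of $Q^n$ can be covered three times, so every vertex is bad and $g(n,m,3)=2^n$. For $m=3$, a vertex is covered three times only if it lies on all three hyperplanes, so the good set sits inside $H_1\cap Q^n$. Since any affine hyperplane avoiding $\vec{0}$ meets $Q^n$ in at most $2^{n-1}$ vertices (for any coordinate $j$ with nonzero coefficient in $H_1$'s equation, the involution of $\{0,1\}^n$ flipping $x_j$ carries $H_1\cap\{0,1\}^n$ off of $H_1$), the bad set has size at least $2^{n-1}$, matching three copies of $x_1=1$. For $4\le m\le n+3$ the upper bound $g(n,m,3)\le 2^{n-m+3}$ is the product construction already recorded in the paper's discussion: for $m\ge 5$ lift the almost $3$-cover of $Q^{m-3}$ from Theorem \ref{thm_3} via $H\mapsto H\times\mathbb{R}^{n-m+3}$, and for $m=4$ invoke the monotonicity $g(n,4,3)\le g(n,3,3)=2^{n-1}$.

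The matching lower bound is the heart of the conjecture. The natural first attempt is induction on $m$ via removal of a single hyperplane: if $V'_j$ is the set of vertices covered exactly $j$ times by $H_1,\ldots,H_{m-1}$ and $V'=V'_0\cup V'_1\cup V'_2$, then a direct case analysis shows that the full bad set is $V=V'\setminus(V'_2\cap H_m)$. The desired bound $|V|\ge 2^{n-m+3}$ thus reduces to the existence of a choice of removed hyperplane with $|V'_2\cap H_m|\le 2^{n-m+3}$. Unfortunately a generic hyperplane can contain as many as $2^{n-1}$ cube vertices, and averaging over the choice of removed hyperplane yields only $\sum_i|V'^{(i)}_2\cap H_i|=3|V_3|$, which we cannot control without already knowing the bound we are trying to prove. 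Closing this gap is the main obstacle; it is also exactly what separates the conjectured $2^{n-m+3}$ from the trivial bound $2^{n-m+2}$ that follows from two successive Alon--F\"uredi removals.

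My principal attack on the obstacle is a polynomial-method one: extend the Punctured Combinatorial Nullstellensatz of Ball--Serra (Lemma \ref{lem_pcn}) to accommodate multiple puncture points, and then rerun the partial-derivative analysis from the proof of Theorem \ref{thm_1} with the candidate set $V'_2\cap H_m$ playing the role of the single excluded vertex $\{\vec{0}\}$; the multiplicity-three vanishing of $\prod_i P_i$ outside $V$ together with the linear structure of the $P_i$ should impose enough constraints on the remainder polynomial $u$ to force the required size bound. A secondary, potentially more robust approach is Ramsey-theoretic, modeled on Alon's remark after Conjecture \ref{conj_weaker}: color each subset of $[n]$ of a chosen size by the indices of the hyperplanes containing the corresponding vertex of $Q^n$, apply Ramsey's theorem to pass to a structured subconfiguration with explicit coefficient patterns, and count bad vertices level by level. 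I expect the multiple-puncture-point analysis to be the cleaner route in principle but genuinely delicate in practice, and it may well require new algebraic ideas beyond the single-puncture framework of Section \ref{sec_3-cover}.
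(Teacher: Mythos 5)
This statement is a \emph{conjecture} in the paper: the authors prove only the trivial lower bound $g(n,m,3)\ge 2^{n-m+2}$ and the matching upper-bound construction $g(n,m,3)\le 2^{n-m+3}$ (via products of almost $3$-covers of $Q^{m-3}$ with $\mathbb{R}^{n-m+3}$), and explicitly state that they \emph{believe} the upper bound is tight. So there is no proof in the paper to compare yours against, and your proposal does not supply one either. The parts you do establish are correct: for $m\le 2$ no vertex can be covered three times, so $g=2^n$; for $m=3$ the triply-covered set lies in $H_1\cap Q^n$, and your coordinate-flip involution correctly shows any hyperplane avoiding $\vec{0}$ meets the cube in at most $2^{n-1}$ vertices; the identity $V=V'\setminus(V'_2\cap H_m)$ is right; and the upper bounds for $4\le m\le n+3$ follow from the paper's own construction plus monotonicity. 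All of this reproduces what the paper already records.

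The genuine gap is exactly where you say it is: the lower bound $g(n,m,3)\ge 2^{n-m+3}$ for $4\le m\le n+3$. Your two proposed attacks are sketches of research directions, not arguments. The single-hyperplane-removal induction fails for the reason you identify ($|V'_2\cap H_m|$ is uncontrolled, and averaging only returns $3|V_3|$, which is circular). The ``multiple-puncture'' extension of Lemma \ref{lem_pcn} is not something you can simply rerun the Theorem \ref{thm_1} analysis through: Ball--Serra's punctured Nullstellensatz only guarantees a nonzero remainder $u$ when the exceptional set is a product $D_1\times\cdots\times D_n$, and the bad set $V$ here has no such grid structure, so the degree bookkeeping that forces $u\equiv 0$ in the proof of Theorem \ref{thm_1} does not transfer. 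The Ramsey approach in Alon's remark addresses a different statement (counting hyperplanes, not uncovered vertices) and there is no indication of how it would yield a $2^{n-m+3}$ vertex count. To be clear, none of this is a criticism of your correctness --- you flag all of it yourself --- but the proposal should be read as a reduction of the conjecture to its hard core together with two candidate strategies, not as a proof.
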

One can further ask the following question for arbitrary $k$.
\begin{question}
Is it true that for all $n, m, k$, 
$$g(n, m, k) = 2^{n-d},$$
where $d$ is the maximum integer such that $f(d, k) \le m$?
\end{question}


\begin{thebibliography}{99}
\bibitem{nullstellensatz} 
N.~Alon, 
\newblock{Combinatorial Nullstellensatz},
\newblock{\em Combinatorics, Probability and Computing},
{\bf 8} (1999), 7--29.


\bibitem{alon-furedi} 
N.~Alon, Z.~F\"uredi, 
\newblock{Covering the cube by affine hyperplanes},
\newblock{\em Eur. J. Comb.},
{\bf 14} (1993), 79--83.

\bibitem{ball-serra}
S.~Ball, O.~Serra,
\newblock{Punctured combinatorial Nullstellens\"atze},
\newblock{\em Combinatorica},
{\bf 29} (5) (2009), 511--522.
	
\bibitem{bbs}
A.~Blokhuis, A.~Brouwer, T.~Sz\H onyi,
\newblock{Covering all points except one},
\newblock{\em Journal of Algebraic Combinatorics},
{\bf 32} (1) (2010), 59--66.

\bibitem{bollobas}
B.~Bollob\'as, 
\newblock{On generalized graphs}, 
\newblock{\em Acta Mathematica Academiae Scientiarum Hungaricae}, 
{\bf 16} (3-4) (1965) 447--452.

\bibitem{bs}
A.~Brouwer, A.~Schrijver,
\newblock{The blocking number of an affine space},
\newblock{\em J. Comb. Theory, Ser. A} 
{\bf 24} (1978), 251--253.

\bibitem{jamison}
R.~Jamison,
\newblock{Covering finite fields with cosets of subspaces},
\newblock{\em J. Comb. Theory, Ser. A},
{\bf 22} (1977), 253--266.

\bibitem{lubell}
D.~Lubell, 
\newblock{A short proof of Sperner's lemma}, 
\newblock{\em J. Comb. Theory}, 
{\bf 1} (2) (1966), 299.

\bibitem{meshalkin}
L.~Meshalkin,
\newblock{Generalization of Sperner's theorem on the number of subsets of a finite set}, 
\newblock{\em Theory of Probability and Its Applications}, 
{\bf 8} (2) (1963), 203--204.

\bibitem{yamamoto}
K.~Yamamoto, 
\newblock{Logarithmic order of free distributive lattice}, \newblock {\em Journal of the Mathematical Society of Japan}, 
{\bf 6} (1954), 343--353.

\end{thebibliography}
\end{document}